\newtheorem{definition}{Definition}
\newtheorem*{question}{Question}
\newtheorem{theorem}{Theorem}
\newtheorem{Remark}{Remark}
\newtheorem{lemma}{Lemma}
\newtheorem{corollary}{Corollary}
\newtheorem{remark}{Remark}
\begin{document}

\author[J.\ Atchley]{James Atchley}
\address{James Atchley, Midland College, Department of Mathematics, 
3600 North Garfield, Midland TX 79705}
\email{jatchley@midland.edu}

\author[L.\ Fishman]{Lior Fishman}
\address{Lior Fishman, University of North Texas, Department of Mathematics, 
1155 Union Circle \#311430, Denton, TX 76203-5017, USA}
\email{lior.fishman@unt.edu}

\author[S.\ Jackson]{Stephen Jackson}
\address{Stephen Jackson, University of North Texas, Department of Mathematics, 
1155 Union Circle \#311430, Denton, TX 76203-5017, USA}
\email{stephen.jackson@unt.edu}

\author[D. Liu]{Daozheng Liu}
\address{Daozheng Liu, University of North Texas, Department of Mathematics, 
1155 Union Circle \#311430, Denton, TX 76203-5017, USA}
\email{daozheng.liu@unt.edu}

\author[E. Yao]{Emily Yao}
\address{Emily Yao, Princeton University, Washington Rd., Princeton, NJ 08544, USA}
\email{ey7643@princeton.edu}

\keywords{Vitali sets, Schmidt games, determinacy}
\subjclass{03E05, 03E15}

\thanks{The third author acknowledges support from NSF grant DMS 1800323.}

\title{Schmidt's game and Vitali sets}

\begin{abstract}
 While many types of non-measurable sets are never (\(\alpha, \beta\))-winning in the sense of Schmidt's game, we show that this is not the case for certain Vitali sets.  Our main theorems show that for certain values of $\alpha, \beta$ one can construct a Vitali set which is $(\alpha, \beta)$-winning, while for other values of $\alpha,\beta$ every Vitali set is $(\alpha,\beta)$-losing.  We also investigate the $(\alpha,\beta)$-Schmidt game for various other types of pathological sets, highlighting their differences from Vitali sets.
\end{abstract}

\maketitle

\section{Introduction} Recall that a Vitali set is a subset of $\mathbb{R}$ that contains exactly one representative for each element of $\mathbb{R} / \mathbb{Q}$ and is a classic example of a set that is not Lebesgue-measurable. This set, along with other sets such as Bernstein sets, Sierpinski sets, Luzin sets, and other pathological sets, is constructed using the Axiom of Choice. Recently, there has been interest in discovering the properties of these sets, as well as the relations between them. For example, Theorem 1.3 in \cite{BS} shows (in ZF set theory) that the existence of a Bernstein set does not imply the existence of a Vitali set. So in some sense, a Vitali set is even ``more pathological'' than a Bernstein set or a Luzin set.

In 1966, W. M. Schmidt \cite{Schmidt} introduced a two-player game, later referred to as Schmidt's game. This
game, as well as the well-known Banach-Mazur game (see \cite{Oxtoby}), is an intersection game. 
Schmidt invented the game
primarily as a tool for studying certain sets which arise in number theory and Diophantine approximation
theory. These sets are often null and meager, that is, exceptional with respect to both measure and category.

Schmidt's $(\alpha,\beta)$ game, with a preassigned target set $T$, is a 2-player game proceeding as follows: Given two real numbers $\alpha$ and $\beta$ strictly between $0$ and $1$ and a set $T \subset \mathbb{R}$, player one (named Bob) makes the initial move, which is any closed interval $B_0$ of any positive diameter $\rho$. Next, player two (named Alice) responds with a closed interval $A_0 \subset B_0$ with a diameter of $(\alpha) \rho$. Bob then follows up with his move $B_1 \subset A_0$ of diameter $(\alpha\beta) \rho$. Alice then responds with $A_1 \subset B_1$ of diameter $\alpha (\alpha\beta) \rho$, and so on. Alice wins if and only if  \(\{x\} = \bigcap_{n=0}^{\infty}A_n \subseteq T\).

A {\em strategy} for one of the players in the game is a function which, given a finite sequence of moves for the other player, will produce a move for the player and is such that the move given by the strategy will follow the rules of the game provided the previous moves do (the rules for Schmidt's game are the diameter and nesting requirements on the moves).  A {\em winning strategy} for one of the players is a strategy for that player such that any run of the game in which the player follows the strategy at every move results in a win for that player. 

\begin{definition}
    $T \subseteq \mathbb{R}$ is said to be $(\alpha,\beta)$-winning if Alice has a winning strategy when $T$ is the target set. If, on the other hand, Bob has a winning strategy, then $T$ is said to be $(\alpha,\beta)$-losing. In either of these cases, $T$ is said to be determined.  Otherwise, it is said to be undetermined or more precisely \((\alpha, \beta)\) undetermined.
\end{definition}

\begin{definition}
    A set of real numbers $T$ is said to be $\alpha$-winning for a fixed $\alpha$ if it is $(\alpha, \beta)$-winning for all $\beta$.
\end{definition}

\begin{remark}
    There are some choices for \((\alpha, \beta)\) where every dense target set is \(\alpha, \beta\) winning (in particular, when $\alpha < 2 - \frac{1}{\beta}$). On the other hand, there are choices where only the entire space is \((\alpha, \beta)\)-winning (in particular, when $\beta < 2 -\frac{1}{\alpha}$). We will refer to these as trivial cases and exclude them in some theorems.  To see these situations described more clearly, see \cite{AFS}.
\end{remark}

One of the properties established by \cite{Schmidt} is that the property of $\alpha$-winning sets is closed under countable intersections. Thus, the collection of $\alpha$-winning sets forms a countably additive filter, similar to the co-meager sets in the Banach-Mazur game.

In this paper our main result is to show that even though there is no Vitali set that is \(\alpha\)-winning (Theorem \ref{awin} and Theorem \ref{abwin}) there \emph{does} exist an \(\alpha, \beta \)-winning Vitali set for some values of \(\alpha\) and \(\beta\) (Theorem \ref{main}). Note that not all Vitali sets are determined.  It follows from a result of Michalski \cite{M} that there is a Vitali set for which Schmidt's game is not determined (see section \ref{NotVitali}).  The final part of this paper is dedicated to Schmidt's game determinacy results for other pathological sets such as Luzin, Sierpinski, and Bernstein sets.

\section{Vitali Sets are Not $\alpha$-winning}
In this section, we prove that for any Vitali set and any value of $\alpha > 0$, there is a $\beta$ such that the Vitali set is not $(\alpha,\beta)$-winning. In particular, we show that if $\beta < \alpha$, a Vitali set is not $(\alpha,\beta)$-winning.

We first give a simple non-constructive proof that no Vitali set is $\alpha$-winning.
\begin{theorem}\label{awin} No Vitali Set is $\alpha$-winning.
\end{theorem}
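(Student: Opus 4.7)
The plan is to derive a contradiction from two standard facts about $\alpha$-winning sets: translation invariance and closure under countable intersections (the latter already noted in the excerpt as a theorem of Schmidt). Suppose toward a contradiction that some Vitali set $V$ is $\alpha$-winning. I claim that for every $q\in\mathbb{Q}$ the translate $V+q$ is also $\alpha$-winning: given Alice's winning strategy $\sigma$ against target $V$, define a strategy $\sigma_q$ by having Alice mentally translate each of Bob's intervals by $-q$, apply $\sigma$ to obtain a response $A$, and then play $A+q$. Since translation is an isometry it preserves both diameters and nesting, so $\sigma_q$ is legal; and the terminal point produced against $\sigma_q$ is exactly $q$ plus the terminal point produced against $\sigma$, hence lies in $V+q$.

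Next, fix any nonzero $q\in\mathbb{Q}$. By the Vitali property, $V$ meets each coset of $\mathbb{Q}$ in a single point, so $V\cap(V+q)=\emptyset$: any $x$ in the intersection would give two distinct elements $x,x-q$ of $V$ in the common coset $x+\mathbb{Q}$. On the other hand, since both $V$ and $V+q$ are $\alpha$-winning, Schmidt's countable-intersection theorem forces $V\cap(V+q)$ to be $\alpha$-winning as well.

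Finally, the empty set is not $\alpha$-winning for any choice of $\alpha,\beta$: Alice's terminal point $\{x\}=\bigcap_n A_n$ is a genuine real number, so it cannot be contained in $\emptyset$, and thus Alice loses every run regardless of strategy. This yields the desired contradiction. No part of the argument looks hard; the only step worth double-checking is the translation invariance of Alice's strategy, which is just bookkeeping, and the invocation of the countable intersection theorem, which is already attributed to Schmidt in the introduction.
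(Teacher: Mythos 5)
Your proof is correct and follows essentially the same route as the paper's: translate the winning strategy to see that $V+q$ is $\alpha$-winning, invoke Schmidt's countable intersection property, and contradict the disjointness $V\cap(V+q)=\emptyset$. You spell out the translation bookkeeping and the non-winningness of $\emptyset$ slightly more explicitly, but the argument is the same.
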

\begin{proof} Let $V$ be a Vitali Set, and $V_q = V + q$ with $q \in \mathbb{Q} \setminus \{0\}$. Suppose by way of contradiction that $V$ is $\alpha$-winning. It is clear that $V_q$ must also be $\alpha$-winning since we can transfer Bob's moves on $V_q$ to $V$ and Alice's following winning moves on $V$ to $V_q$. Then Alice wins on $V_q$, which means that $V\cap V_q$ is nonempty since it is $\alpha$-winning. However, for all $q \in \mathbb{Q} \setminus \{0\}$, $V_q \cap V = \emptyset$, which is a contradiction to the countable intersection property of $\alpha$-winning sets. 
\end{proof}

We present now a more constructive proof that for $\beta < \alpha$ no Vitali set is $(\alpha,\beta)$-winning which motivates our later results. 
\begin{theorem}\label{abwin} If $ \beta < \alpha $ then no Vitali set is $(\alpha,\beta)$-winning.
\end{theorem}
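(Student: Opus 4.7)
The plan is to argue by contradiction: suppose Alice has a winning strategy $\sigma$ for the target $V$, and produce, playing as Bob, a sequence of moves that simultaneously corresponds to two plays against $\sigma$ whose limit points differ by a fixed nonzero rational. Since $\sigma$ wins both plays, both limits must lie in $V$, contradicting the disjointness $V \cap (V+q) = \emptyset$ for $q \in \mathbb{Q}\setminus\{0\}$ that defines a Vitali set.

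Concretely, we would fix a small rational $q \neq 0$ and have Bob play the ``real'' game with moves $B_0, B_1, \ldots$ while also simulating a ``shadow'' game whose moves are $B_0' = B_0 + q,\ B_1' = B_1 + q, \ldots$. In the shadow game Alice's hypothetical moves are $A_n' = \sigma(B_0', \ldots, B_n')$, and in the real game they are $A_n = \sigma(B_0, \ldots, B_n)$. Both games are simultaneously valid precisely when, at each step, Bob can choose $B_{n+1}$ of length $\alpha\beta|B_n|$ with $B_{n+1} \subset A_n \cap (A_n' - q)$. A length calculation reduces this to keeping the drift $|a_n - a_n' + q|$ (where $a_n, a_n'$ denote left endpoints of $A_n, A_n'$) bounded by $\alpha(1-\beta)|B_n|$. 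If Bob succeeds at every stage, the limits $x = \lim B_n$ and $x + q = \lim B_n'$ both lie in $V$, yielding the desired contradiction.

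The hypothesis $\beta < \alpha$ enters by providing exactly the slack $\alpha(1-\beta)|B_n|$ in the drift constraint: the more aggressively Bob shrinks inside Alice's interval relative to Alice's own shrinkage, the more room remains to absorb the relative positioning of Alice's two responses. The main obstacle we anticipate is controlling this drift, since $\sigma$ is not assumed to be translation-equivariant and $A_n$ and $A_n' - q$ could sit adversarially within their respective parent intervals. We plan to handle this by letting the shift depend on $n$, choosing rationals $q_n$ at each step tailored to Alice's actual responses, and showing that the $\beta < \alpha$ slack lets the $q_n$ stabilize at a fixed nonzero rational from which the contradiction above can be extracted.
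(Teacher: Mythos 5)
Your overall framing---two simultaneous plays against Alice's strategy whose limit points differ by a nonzero rational, contradicting $V\cap(V+q)=\emptyset$---is the same as the paper's. But the execution has a genuine gap, and you have correctly located it yourself: the drift control is the entire difficulty, and your proposed fix does not work. With a shadow game of the \emph{same} length ($B_n'=B_n+q_n$ with $|B_n'|=|B_n|$), Alice may place $A_n$ at one end of $B_n$ and $A_n'$ at the other end of $B_n'$, forcing the required shift to move by up to $(1-\alpha)|B_n|$ at stage $n$, while the slack available to Bob at that stage is only $\alpha(1-\beta)|B_n|$ and the \emph{total} slack available at all future stages is only $O\bigl(\alpha(1-\beta)(\alpha\beta)^{n+1}|B_0|\bigr)$, which is smaller still. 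So the sequence $q_n$ cannot be forced to stabilize, and its limit is essentially dictated by Alice, who can keep it away from any prescribed rational. Note also that the slack $\alpha(1-\beta)|B_n|$ you compute is present for \emph{all} $\alpha,\beta$, so this is not where the hypothesis $\beta<\alpha$ enters; your sketch never actually uses it.

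The missing idea, which is how the paper proceeds, is to run the two games at \emph{different scales} rather than at the same scale with a drifting shift. Bob opens the two games with disjoint intervals of lengths $\rho$ and $\rho'$ satisfying $\beta\rho\le\rho'\le\alpha\rho$ (this is exactly where $\beta<\alpha$ is needed, to make this choice possible). Then $|A_0'|=\alpha\rho'\le\alpha^2\rho<|A_0|$, so a single rational $q$ can be chosen \emph{after} seeing $A_0$ and $A_0'$ with $A_0'\subseteq A_0+q$; and since $|B_1|=\alpha\beta\rho\le\alpha\rho'=|A_0'|$, Bob can place $B_1$ with $B_1+q\subseteq A_0'$, hence $A_1+q\subseteq A_0'$, hence $B_1'\subseteq A_1+q$, and so on. The two plays interleave into a single nested chain
\[
A_0+q\supseteq A_0'\supseteq B_1+q\supseteq A_1+q\supseteq B_1'\supseteq A_1'\supseteq\cdots,
\]
so the containments are automatic at every stage, no drift ever has to be absorbed, and one fixed $q$ works throughout. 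If you want to repair your write-up, replace the equal-length shadow game by this two-scale interleaving; the rest of your contradiction then goes through verbatim.
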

\begin{proof} Suppose, by way of contradiction, that Alice does have a winning strategy which we will call $\tau$. We will show that there is always a way for Bob to ``foil" Alice's strategy.

Fix a Vitali set, $V$. Let Bob and Alice play two games on $V$.
To begin the two games, Bob chooses two disjoint intervals $B_0$ and $B_0^\prime$ where $B_0$ is the first move for the first game and $B_0^\prime$ is the first move for the second game such that $$\beta|B_0|\leq|B_0^\prime|\leq\alpha|B_0|$$ \\[-1 ex]
and $|B_0| = \rho$, $|B_0^\prime| = \rho^\prime$ denote the lengths of $B_0$ and $B_0^\prime$ respectively.

Alice then takes her turn, using her winning strategy $\tau$ to play $A_0$ and $A_0^\prime$ on each of the games. By the above inequality, there exists $q \in \mathbb{Q} \setminus \{0\}$ such that $A_0^\prime = \tau(B_0^\prime) \subseteq A_0 + q = \tau(B_0) + q$. Since $\rho^\prime\leq\alpha\rho$, $|A_0^\prime| = \alpha\rho^\prime\leq\alpha^2\rho<\alpha\rho$.

Now it is Bob's turn again. Let him first play $B_1$. We can see that $|B_1| = \alpha\beta\rho < \alpha\rho^\prime=|A_0^\prime|$. Thus, Bob may choose $B_1$ so that $B_1 + q \subseteq A_0^\prime$. Alice then plays $A_1 = \tau(B_0,B_1).$ Notice that $A_1 + q \subseteq A_0^\prime$. Bob can then choose $B_1^\prime \subseteq A_1 + q$ since $|B_1^\prime| = \alpha\beta\rho^\prime \leq \alpha^2\beta\rho = |A_1| = |A_1 + q|$. Then $A_1^\prime \subseteq A_1 + q$. 
\begin{center}
\begin{tikzpicture}[scale=0.75]
    \draw[<->] (-8,0) -- (8,0);

    %A_0 + q
    \node at (-7, 1.3) {$A_0+q$};
    \draw[thick, dotted] (-6.7, 1) -- (-7, 1) -- (-7, -1) -- (-6.7, -1);
    \draw[thick, dotted] (6.7, 1) -- (7, 1) -- (7, -1) -- (6.7, -1);

    %A_0'
    \node at (-6, 1.3) {$A_0^\prime$};
    \draw (-5.7, 1) -- (-6, 1) -- (-6, -1) -- (-5.7, -1);
    \draw (5.7, 1) -- (6, 1) -- (6, -1) -- (5.7, -1);

    %B_1 + q
    \node at (-5, -1.3) {$B_1+q$};
    \draw[thick, dotted] (-4.7, 1) -- (-5, 1) -- (-5, -1) -- (-4.7, -1);
    \draw[thick, dotted] (4.7, 1) -- (5, 1) -- (5, -1) -- (4.7, -1);

    %A_1 + q
    \node at (-4, 1.3) {$A_1+q$};
    \draw[thick, dotted] (-3.7, 1) -- (-4, 1) -- (-4, -1) -- (-3.7, -1);
    \draw[thick, dotted] (3.7, 1) -- (4, 1) -- (4, -1) -- (3.7, -1);

    %B_1'
    \node at (-3, -1.3) {$B_1^\prime$};
    \draw (-2.7, 1) -- (-3, 1) -- (-3, -1) -- (-2.7, -1);
    \draw (2.7, 1) -- (3, 1) -- (3, -1) -- (2.7, -1);

    %A_1'
    \node at (-2, 1.3) {$A_1^\prime$};
    \draw (-1.7, 1) -- (-2, 1) -- (-2, -1) -- (-1.7, -1);
    \draw (1.7, 1) -- (2, 1) -- (2, -1) -- (1.7, -1);
    
\end{tikzpicture} \end{center}
In general, given that $A_n^\prime   \subseteq A_n+q$, it follows by the same argument that Bob can pick $B_{n+1}$ and $B_{n+1}^\prime$ such that % $B_{n+1} + q \subseteq A_n^\prime$ since $|B_{n+1}| = \rho(\alpha\beta)^{n+1}=\rho\alpha\beta(\alpha\beta)^n<\rho^\prime\alpha(\alpha\beta)^n=|A_n^\prime|$ and $B_{n+1}^\prime \subseteq A_{n+1} + q$ since $|B_{n+1}^\prime| = \rho^\prime(\alpha\beta)^{n+1}=\rho^\prime\alpha\beta(\alpha\beta)^n\leq\rho\alpha(\alpha\beta)^n=\rho(\alpha\beta)^{n+1}=|A_{n+1}+q|$ for all $n$.
% \\[2 ex]
%Eventually, one obtains for all $n \in \mathbb{N}$, 
$A_{n+1}^\prime \subseteq A_{n+1} + q$. Thus, $\bigcap\limits_{n = \mathbb{N}}A_n^\prime = \bigcap\limits_{n = \mathbb{N}}A_n + q$ By the nested interval property, the first game ends in a point $\{x\} = \bigcap\limits_{n = \mathbb{N}}A_n$ and the second game ends in a point $\{x^\prime\} = \bigcap\limits_{n = \mathbb{N}}A_n^\prime$. Then $\{x^\prime\} = \bigcap\limits_{n = \mathbb{N}}A_n^\prime = \bigcap\limits_{n = \mathbb{N}}A_n + q = \{x + q\}$.

Since $\tau$ is a winning strategy, both $x$ and $x^\prime$ must be in the same Vitali set $V$. However, they differ by some $q \in \mathbb{Q} \setminus \{0\}$ and thus must be in the same equivalence class, resulting in a contradiction.
\end{proof} 
\section{$(\alpha,\beta)$-winning Vitali sets}
In this section, we show that for any $\beta > 0$ if $\alpha$ is small enough, then there is a Vitali set that is $(\alpha,\beta)$-winning. Our proof is constructive and we produce a partial Vitali set that is $(\alpha,\beta)$-winning without utilizing the Axiom of Choice. We do this in stages. We first show that for any fixed initial move $I$, there exists a Vitali set that is $(\alpha,\beta, I)$-winning (that is Alice has a winning strategy when Bob plays $I$ as their first move). Here we only need $\alpha < \min\{\frac{1}{12}, \beta\}$. We then extend the argument to show (Theorem 3) that for any fixed $\rho$, there is a Vitali set that is $(\alpha,\beta, \rho)$-winning (That is Alice has a winning strategy when Bob's first move is any ball of diameter $\rho > 0$). Finally we extend the argument, further requiring that $\beta > \frac{\alpha}{(1-5\alpha)^2}$, to show that there is a Vitali set that is $(\alpha,\beta)$-winning.

\begin{definition}\label{partial} A partial Vitali set is a set that contains at most one representative for each element of $\mathbb{R} / \mathbb{Q}$.
\end{definition}
\begin{Remark}
    
We will construct a partial Vitali set that is $(\alpha, \beta)$ winning. As mentioned above, this does not require the Axiom of Choice.  Extending this set to a full Vitali set is our only use of AC.

\end{Remark}

\begin{lemma}\label{abi} If $\alpha$ is sufficiently small $(\alpha < \frac{1}{12})$ and $\alpha < \beta$,  then for any fixed first move $I$ there exists a Vitali set $V_I$ such that $V_I$ is $(\alpha, \beta, I)$-winning.
\end{lemma}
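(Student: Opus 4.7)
The plan is to construct a partial Vitali set $V'_I\subseteq I$ together with a winning strategy $\tau$ for Alice forcing the outcome into $V'_I$; a full Vitali set $V_I$ is then obtained from $V'_I$ by one use of the Axiom of Choice.  The set $V'_I$ will be a Cantor-like nested intersection $\bigcap_n C_n$, where each $C_n$ is a finite disjoint union of closed intervals of length $\ell_n := \alpha(\alpha\beta)^n|I|$, matching the required diameter of Alice's $n$-th move.  Alice's first move $A_0$ is a fixed sub-interval of $I$ of length $\alpha|I|$, which I take as $C_0$.  At each stage $n\ge 1$, I replace every component of $C_{n-1}$ by $M$ sub-intervals of length $\ell_n$, placed at baseline spacing $g\in\left[\frac{(1-\beta)\ell_{n-1}}{M-1},\;\beta(1-\alpha)\ell_{n-1}\right]$ with individual jitter room $\epsilon_n := \beta(1-\alpha)\ell_{n-1}-g>0$.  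The integer $M$ is chosen in $\left(1+\frac{1-\beta}{\beta(1-\alpha)},\;\frac{1-\alpha}{4\alpha\beta}\right)$, which is non-empty precisely because $\alpha<1/12$ (and in particular this bound forces $M\alpha\beta<1$).

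With this scaffolding, Alice's strategy is nearly automatic.  The upper bound on $g$ yields the \emph{coverage property}: every sub-interval of a component of $C_{n-1}$ of length $\beta\ell_{n-1}$ contains some component of $C_n$.  Hence, given Bob's move $B_n\subseteq A_{n-1}$ (where $A_{n-1}$ is the component of $C_{n-1}$ Alice previously chose), Alice plays $A_n$ to be a component of $C_n$ contained in $B_n$; the outcome $\bigcap_n A_n$ is then forced into $V'_I$.  The substantive task is to use the jitter freedom to arrange the partial Vitali property $(V'_I-V'_I)\cap\mathbb{Q}=\{0\}$.  I enumerate $\mathbb{Q}\setminus\{0\}=\{q_m\}_{m\ge 1}$, assign each $q_m$ to a stage $N_m$, and at stage $N_m$ choose the jitters so that $C_{N_m}\cap(C_{N_m}+q_m)=\emptyset$; this property is then preserved by the nesting of the $C_n$, so the limit $V'_I$ avoids all rational translates of itself.

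The principal obstacle I anticipate is showing that these jitter adjustments are simultaneously feasible, despite the exponential growth of the component count $J_n=M^n$.  The key estimate is a density bound: the number of level-$n$ components lying in any window of length $O(\ell_{n-1})$ is at most $2\beta M(M\alpha\beta)^{n-1}$, uniformly bounded by $2\beta M$ once $M\alpha\beta<1$.  Consequently, for any fixed rational shift $q$ each level-$n$ component has at most $O(M)$ ``close'' neighbors, and the upper bound $M<\frac{1-\alpha}{4\alpha\beta}$ makes the total length of jitter values forbidden by the separation constraints for $q$ come out strictly less than $\epsilon_n$.  Jitters can therefore be selected one component at a time, in finite Baire-category fashion, to satisfy every separation constraint at stage $N_m$.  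Once each rational has been processed at its assigned stage, $V'_I$ is a partial Vitali set, and extending via AC produces the desired $V_I$.
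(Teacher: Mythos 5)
Your architecture is the same as the paper's: a Cantor scheme of potential Alice moves with a coverage property that lets Alice always descend into it, plus positional slack at each level spent on one rational at a time so that the limit is a partial Vitali set. The gap is in the feasibility of the jitter selection, which is the heart of the matter. In your scheme the components of $C_n$ \emph{are} the Alice moves, of length $\ell_n=\alpha\beta\,\ell_{n-1}$, and the only slack is $\epsilon_n=\beta(1-\alpha)\ell_{n-1}-g$. But $g$ cannot be made small: keeping the jittered components of $C_n$ pairwise disjoint requires $g-\epsilon_n\ge\ell_n$, which together with the coverage bound $g+\epsilon_n\le\beta(1-\alpha)\ell_{n-1}$ forces $g\ge\tfrac{\beta}{2}\ell_{n-1}$ and hence $\epsilon_n\le\beta\bigl(\tfrac12-\alpha\bigr)\ell_{n-1}$, i.e.\ less than half of $\beta(1-\alpha)\ell_{n-1}$. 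Meanwhile your partner count ($\le 2\beta M$) times the forbidden measure per partner ($2\ell_n$) gives total forbidden length up to $4\beta M\alpha\beta\,\ell_{n-1}$, and $M<\frac{1-\alpha}{4\alpha\beta}$ only bounds this by $\beta(1-\alpha)\ell_{n-1}$ --- roughly \emph{twice} the available $\epsilon_n$ --- so the asserted inequality ``forbidden $<\epsilon_n$'' does not follow. Concretely, with $\alpha=\tfrac1{13}$, $\beta=\tfrac1{12}$, the admissible $M$ start at $13$, for which $\epsilon_n\le\tfrac{1}{1872}\ell_{n-1}\approx 5\cdot 10^{-4}\ell_{n-1}$ while a single collision partner already forbids a set of measure up to $2\ell_n\approx 1.3\cdot 10^{-2}\ell_{n-1}$; a rational $q$ equal to a baseline spacing then admits no valid jitter at all. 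Optimizing $g,\epsilon_n,M$ one can make a union bound of your type close for $\alpha$ below roughly $\tfrac1{14}$ (smaller still once collisions across adjacent parents are counted), but not on the whole range $\alpha<\tfrac1{12}$ the lemma claims.

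The paper avoids this bind with one device your proposal is missing: the level-$n$ slots are intervals $\omega^n_i$ of length $4\ell_n$ --- four times the Alice move --- separated by gaps exceeding their own length (this is exactly where $\alpha<\tfrac1{12}$ is used). Coverage still holds, each slot has room $3\ell_n$ in which to place the actual move $A^n_i$, and, crucially, because the gaps exceed the slot length, for any rational $q$ longer than a slot each $\omega^n_i+q$ meets at most \emph{one} other slot. The pairwise constraints then organize into chains, and a greedy choice of $A^n_i\subset\omega^n_i$ dodging the single forbidden translate of length $\ell_n$ inside a slot of length $4\ell_n$ always succeeds. Inserting this ``fat slot / at most one partner'' structure would repair your argument; as written, the measure estimate at the core of your stage-$N_m$ selection is not established and fails for parameters near the boundary.
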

\begin{proof} Enumerate $\mathbb{Q}$ and consider $q_0$. Consider Bob's first move $B_0$ and let $|B_0| = \rho$. Then let Alice play her move $A_0$ in the middle (or close to the middle) of $B_0$. Let $\omega^1_1, \cdots, \omega^1_m$ be a collection of intervals of length $4\alpha(\alpha\beta)\rho$ that are separated by more than $4\alpha(\alpha\beta)\rho$ such that any of Bob's potential moves must contain at least one interval. If $A_0$ is the interval from $0$ to $\beta\rho$, place left endpoint of $\omega^1_l$ at $l\alpha\beta\rho(1-4\alpha)$ the right endpoint at $l\alpha\beta\rho(1-4\alpha) + 4\alpha(\alpha\beta)\rho$. Notice that if $\alpha<\frac{1}{12}$, then the distance between $\omega_l$ and $\omega_{l+1}$ is at least the length of $\omega_l$.

If $4\alpha(\alpha\beta)\rho > q_0$, take the middle of each $\omega^1$ as potential Alice moves $A^1$'s and construct $\omega^2_1, \cdots, \omega^2_{m^2}$ by the same process as the $\omega^1$'s for each $A^1$. Notice that the size of these intervals is $4\alpha(\alpha\beta)^2\rho$. Continue this process, taking the middle of each $\omega^n$ as $A^n$ until $4\alpha(\alpha\beta)^n\rho < q_0$. Let $n_0$ be the least such $n$.

We claim that the shift of any $\omega^n$ by $q_0$ will intersect at most one other $\omega^n$. Consider some $\omega^n_1 + q_0$. Suppose, for the sake of contradiction, that there exist $m_1, m_2$ such that $\omega^n_{m_1} \cap \omega^n_1 + q_0 \neq \emptyset$ and $\omega^n_{m_2} \cap \omega^n_1 + q_0 \neq \emptyset$ . Then $|\omega^n_1| > \big((\alpha\beta)^n - 8\alpha(\alpha\beta)^n)\rho > 4\alpha(\alpha\beta)^n\rho$ (the length of the gap between intervals by construction), which contradicts $|\omega^n_1|=4\alpha(\alpha\beta)^n\rho$. Thus, $\omega^n_1 + q_0$ intersects at most one other $\omega^n$. Similarly, for any $\omega^n_i$, there is at most one other $\omega^n_m$ such that if we shift $\omega^n_m$, we will intersect $\omega^n_i$.

We construct a series of chains, where each chain is a series of $\omega^n$'s that intersect the next $\omega^n$ when they are shifted by $q_0$. Consider the equivalence relation generated by the relation $\omega^n_i\sim \omega^n_j $ if $\omega^n_i + q_0$ intersects $\omega^n_j$. Let each chain be an equivalence class. By the previous paragraph, each equivalence class is a chain of these $\omega_n$'s where each interval shifted by $q_0$ intersects the next one.

For each $\omega^n_i$, we now define Alice's potential move, $A^n_i \subset \omega^n_i$. Consider a chain, and let $\omega^n_{i_0}$ be the first element of its chain. Let $A^n_{i_0} \subset \omega^n_{i_0}$ be an arbitrary interval of the right size. Let the next element of the chain be $\omega^n_{i_1}$ and let $A^n_{i_1} \subset \omega^n_{i_1}$ be such that $A^n_{i_0} + q_0$ is disjoint from $A^n_{i_1}$ (we can do this since $|\omega^n_{i_1}| > 3|A^n_{i_1}|$). We continue in a similar manner to define $A^n_i$ for every $\omega^n_i$ in the chain and repeat this process for all chains.

Now consider our second rational, $q_1$. Have the construction of choosing plays in the middle of each $\omega^n$ be the same as above until we reach an $n_1$ where $4\alpha(\alpha\beta)^{n_1}<q_1$ $(n_1 > n_0)$ and repeat the above construction for this $n_1$ to define $A^{n_1}_{i}$.

Eventually, we will do this for every rational number $q_j$, that is, we define $A^{n_j}_i$ for all $j$. This tree of resulting $A^{n_j}_i$'s gives a perfect set $P_I$ that is also a partial Vitali set. Pick an element from each missing Vitali equivalence class to get a full Vitali set, and we have constructed a Vitali set that is $(\alpha, \beta, I)$-winning.
\end{proof}

\begin{lemma}\label{abi2} If $\alpha < \frac{1}{12}$, $\alpha<\beta$, and $\rho > 0$, and $I_1$ and $I_2$ are two different intervals of length $\rho$, then there exists a Vitali set that is both $(\alpha, \beta, I_1)$-winning and $(\alpha, \beta, I_2)$-winning.
\end{lemma}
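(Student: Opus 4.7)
The plan is to extend the construction of Lemma 1 to handle two initial Bob-moves simultaneously. We build two parallel trees of interval candidates, one rooted at $I_1$ and one at $I_2$, producing partial Vitali sets $P_1 \subset I_1$ and $P_2 \subset I_2$ such that each tree yields a winning Alice-strategy for its respective initial move. The crucial additional requirement is that the union $P_1 \cup P_2$ is itself a partial Vitali set (no two of its points differ by a nonzero rational), so the final Vitali set is obtained by extending $P_1 \cup P_2$ via the Axiom of Choice. In particular, beyond the within-tree $q_j$-disjointness guaranteed by Lemma 1, we need to prevent cross-tree conflicts between points in $P_1$ and points in $P_2$.

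The construction processes the rationals $q_0, q_1, \ldots$ exactly as in Lemma 1 by choosing a processing level $n_j$ for each $q_j$, but at each such level we must jointly choose Alice-move candidates $A^{n_j}_i$ inside $\omega^{n_j}_i$ across both trees, ensuring no two such $A$'s (whether from the same tree or across trees) differ by exactly $q_j$. I would organize this by first describing the within-tree chain argument from Lemma 1 and then extending it. For fixed $q_j$, consider the relation $\omega \sim \omega'$ iff $\omega + q_j$ intersects $\omega'$, now defined on the combined collection of all level-$n_j$ intervals in both trees. Within a single tree the original counting gives at most one successor and one predecessor per $\omega$; since the intervals in the other tree share the same common length $4\alpha(\alpha\beta)^{n_j}\rho$ and the same separation exceeding their length, the same geometric argument shows that a translate $\omega + q_j$ intersects at most one $\omega'$ in the other tree as well. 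Ordering the combined collection by spatial position (left-to-right if $q_j>0$), each $\omega$ has at most two previously-processed neighbors whose $q_j$-translates constrain our current choice.

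The main obstacle is then that two forbidden sub-intervals of length up to $|A|$ inside an $\omega$ of length only $4|A|$ can, in general, obstruct every placement of an Alice-move of length $|A|$; this is strictly harder than the single-constraint situation of Lemma 1, which was comfortably handled by $|\omega| > 3|A|$. To overcome this, I would choose Alice moves along chains in a forward-looking, coordinated way: when selecting the Alice move for a chain predecessor, I anticipate the forbidden sub-interval its $q_j$-translate will impose on each successor and place the predecessor so that this forbidden sub-interval lies near a boundary of the successor's $\omega$, leaving a contiguous free region of length at least $|A|$. Because each $\omega$ affords positional slack $3|A|$ for its own Alice move, and because the two constraint-sources (one within-tree chain, one cross-tree) can be separated spatially by suitable choice, this propagation can be carried out along both trees consistently.

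Once the coordinated placement succeeds at every processing level, the nested tree structures define perfect partial Vitali sets $P_1 \subset I_1$ and $P_2 \subset I_2$ with $P_1 \cup P_2$ still partial Vitali. Alice's strategy for initial move $I_k$ is precisely the one from Lemma 1 applied to the tree rooted at $I_k$: given Bob's move $B_n \subset I_k$, play the Alice-move of the unique $\omega^{n}_i \subset B_n$. The limit point lies in $P_k \subset V$, so $V$ is $(\alpha,\beta,I_k)$-winning for both $k=1,2$. I expect the technical heart of the write-up to be the forward-looking placement argument in the final step, since the chain-structure in the combined two-tree graph can have branching at each node and requires verifying that the coordinated positioning is always simultaneously feasible across both trees.
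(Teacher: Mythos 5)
Your overall architecture (two parallel trees, make $P_{I_1}\cup P_{I_2}$ a partial Vitali set, extend by AC) matches the paper, but you diverge at the crucial combinatorial step, and that divergence opens a real gap. You insist on handling, at a single level $n_j$, \emph{all} $q_j$-constraints at once: within tree 1, within tree 2, and across the two trees. As you yourself compute, this means an $\omega$ of length $4|A|$ can carry two independent forbidden sub-intervals of length up to $|A|$, which can indeed leave no contiguous free segment of length $|A|$ (three gaps of length $\tfrac{2}{3}|A|$ each). Your proposed repair --- a forward-looking placement that pushes each predecessor's translated image to the boundary of its successor's $\omega$ --- is asserted rather than proved, and it faces a concrete obstruction: in the combined constraint graph a single $A$ can have \emph{two} successors (one in each tree) whose $\omega$'s sit at independent, incommensurate offsets from $A+q_j$, so a single choice of position for $A$ cannot in general park its image at the boundary of both successors' windows; simultaneously $A$'s own position is already constrained by its up-to-two predecessors. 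You flag this as ``the technical heart'' to be verified, but it is precisely the point at which the argument is not known to go through, so as written the proof is incomplete.

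The paper avoids this entirely by a scheduling trick you did not consider: it never imposes two kinds of constraint at the same level. At even stages $2j$ it runs the Lemma~\ref{abi} construction for $q_j$ inside each tree separately (one constraint per $\omega$, the already-solved case), and at odd stages $2j+1$ it descends to a \emph{fresh, deeper} level and there imposes only the cross-tree condition that $A^{n_{2j+1}}_i(I_1)+q_j$ be disjoint from $A^{n_{2j+1}}_{i'}(I_2)$; moreover it handles only this one direction of the shift, leaving $A(I_2)+q_j$ versus $A(I_1)$ to the later stage at which $-q_j$ is enumerated. Since the translate of each $\omega(I_1)$ meets at most one $\omega(I_2)$ and vice versa, the cross-tree constraints form a partial matching, and each matched pair is resolved independently using only $|\omega|>3|A|$. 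If you reorganize your construction so that each level of the tree is charged with exactly one rational, one pair of trees (or one tree), and one direction of translation, your argument closes without any coordinated-placement lemma.
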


\begin{proof} Fix the intervals $I_1$ and $I_2$, both of length $\rho$. We will define the Alice moves $A^{n_j}_i (I_1)$ and $A^{n_j}_i (I_2)$ for the two intervals respectively. We will define $P_{I_1}$ and $P_{I_2}$ as before, but we will also ensure that $P_{I_1} \cup P_{I_2}$ is a partial Vitali set. 

At even stages $2j$ of the construction, we repeat the construction of Lemma 1 for the rational $q_j$. This will ensure that each $P_{I_1}$ and $P_{I_2}$ is a partial Vitali set. At odd stages $2j + 1$, we will ensure that each $A^{n_j}_i (I_1) + q_j$ is disjoint from each $A^{n_j}_{i^\prime} (I_2)$. If we do this, $P_{I_1} \cup P_{I_2}$ will be a partial Vitali set. Go to the next level of the construction for both $I_1$ and $I_2$, $\omega^{n_{2j + 1}}$. Consider the intervals $\omega^{n_{2j + 1}}_i (I_1)$ and $\omega^{n_{2j + 1}}_{i^\prime} (I_2)$ for the two sides of the construction. Note that by the same argument as Lemma 1, every $\omega^{n_{2j + 1}}_i (I_1) + q_j$ intersects at most one $\omega^{n_{2j + 1}}_{i^\prime} (I_2)$. For every such pair of $i, i^\prime$, choose $A^{n_{2j + 1}}_i (I_1) \subset \omega^{n_{2j + 1}}_i (I_1)$ and $A^{n_{2j + 1}}_{i^\prime} (I_2) \subset \omega^{n_{2j + 1}}_{i^\prime} (I_2)$ such that $A^{n_{2j + 1}}_i (I_1) + q_j$ is disjoint from $A^{n_{2j + 1}}_{i^\prime} (I_2)$. Then for any $x \in P_{I_1}$ and $y \in P_{I_2}$, $x + q_j \neq y$. Since $-q_j$ is also considered at some stage, $y + q_j \neq x$. Thus, $|x - y| \neq q_j$, and thus $P_{I_1} \cup P_{I_2}$ is a partial Vitali set.
\end{proof}

\begin{corollary}\label{countably} If $\alpha < \frac{1}{12}$, $\alpha<\beta$, and $\rho > 0$, and there are countably many intervals $I_1, I_2, \cdots$, all of length $\rho$, then there exists a Vitali set that is $(\alpha, \beta, I_n)$-winning for all $n$.
\end{corollary}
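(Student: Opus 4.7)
The plan is to dovetail the construction from Lemma \ref{abi2}: I build partial Vitali sets $P_{I_1}, P_{I_2}, \dots$ (each a ``strategy tree'' witnessing $(\alpha,\beta,I_n)$-winningness as in Lemma \ref{abi}) so that $\bigcup_n P_{I_n}$ is itself a partial Vitali set, and then extend to a full Vitali set via AC. The resulting Vitali set is $(\alpha,\beta,I_n)$-winning for every $n$ simultaneously, since the tree $P_{I_n}$ encodes Alice's winning responses when Bob opens with $I_n$.

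Fix a bookkeeping enumeration $(k_s,l_s,j_s)_{s\in\mathbb{N}}$ of all triples with $k_s<l_s$ in $\mathbb{N}$ and $q_{j_s}$ ranging over the nonzero rationals, so that every such triple appears. The construction proceeds in stages. At stage $s$, I advance one level in the Lemma \ref{abi} tree for each of the finitely many active intervals $I_1,\dots,I_s$, choosing Alice's potential moves $A^{n_s}_i(I_p) \subset \omega^{n_s}_i(I_p)$, where $n_s$ is chosen deep enough that $4\alpha(\alpha\beta)^{n_s}\rho$ is smaller than every rational $q_{j_{s'}}$ scheduled at any stage $s'\le s$. At this level I discharge two families of disjointness obligations: (i) the intra-tree obligation from Lemma \ref{abi}, requiring, for every active $I_p$ and every rational considered so far, that along each $+q_j$-chain the subintervals be chosen with $A^{n_s}_i(I_p)+q_j$ disjoint from $A^{n_s}_{i'}(I_p)$; and (ii) the cross-tree obligation from Lemma \ref{abi2} for the current triple $(k_s,l_s,j_s)$, placing $A^{n_s}_i(I_{k_s})+q_{j_s}$ disjoint from $A^{n_s}_{i'}(I_{l_s})$. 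Exactly as in the previous lemmas, for any single shift $+q$ and any pair of trees, each $\omega$-interval meets at most one $\omega$-interval on the other side after the shift, so each $\omega^{n_s}_i$ is involved in only finitely many disjointness conditions at stage $s$. Since $|\omega^{n_s}_i|>3|A^{n_s}_i|$, there is enough slack inside each $\omega$ to realize any finite family of such constraints, and the interior position guarantees each $P_{I_p}$ remains a legitimate Alice-strategy tree.

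The main obstacle is the scheduling itself: confirming that the countably many $\mathbb{Q}$-inequivalence requirements across all pairs of trees can all be enforced without conflict. This is resolved by the enumeration: every triple $(n,m,j)$ with $n<m$ is processed at some stage $s\ge m$, so for any $x\in P_{I_n}$, $y\in P_{I_m}$, and $q=q_j\neq 0$, stage $s$ forces $x-y\neq q$; processing $(n,m,j')$ with $q_{j'}=-q$ gives $y-x\neq q$. Combined with the internal partial-Vitali property of each $P_{I_n}$ ensured by (i), this shows $\bigcup_n P_{I_n}$ contains no two $\mathbb{Q}$-equivalent points, hence is a partial Vitali set. Completing it by AC gives the desired Vitali set.
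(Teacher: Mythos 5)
Your proposal is correct and follows essentially the same route as the paper: enumerate all triples (pair of interval indices, rational) and dovetail the Lemma~\ref{abi2} construction, discharging at each stage the intra-tree constraints that keep each $P_{I_n}$ a partial Vitali set and the cross-tree disjointness constraint for the current triple, then take the union and extend by AC. Your added remarks on why each $\omega$-interval carries only finitely many constraints per stage (and hence has room) make explicit what the paper leaves implicit.
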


\begin{proof}
Enumerate all triples $q_j, k_j, l_j$ where $q_j$ is a rational and $k_j$ and $l_j$ are integers. At step $j$ of the construction, repeat Lemma \ref{abi2} for $I_{k_j}$ and $I_{l_j}$ for $q_j$. Then the union of the $P_{I_n}$ is a partial Vitali set. 
\end{proof}
\noindent We modify slightly the construction of Lemma \ref{abi2} to get the following lemma. 
\begin{lemma}\label{epsclose} Fix $\alpha$, $\beta$, $I_1,$ and $I_2$ where $I_1$ and $I_2$ are intervals of length $\rho$. Then there is a Vitali set and an $\epsilon > 0$ such that if $I$ is an interval of length $\rho$ whose endpoints are $\epsilon$ close to $I_1$ or $I_2$ then the Vitali set is $(\alpha,\beta,I)$-winning. 
\end{lemma}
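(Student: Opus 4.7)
The plan is to use the construction from Lemma \ref{abi2} essentially unchanged, with only a careful choice for Alice's first moves in the two base cases, then observe that any sufficiently small perturbation $I$ of $I_1$ (or $I_2$) contains that first move verbatim, so the remainder of the strategy can be reused wholesale. The key point is that after Alice commits to her first move $A_0$, every subsequent interval in the play lies inside $A_0$, and so the rest of the game is independent of what Bob chose for $B_0$.

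First, I would invoke Lemma \ref{abi2} on the pair $I_1, I_2$, but insist that Alice's first move $A_0(I_j)$ (for $j=1,2$) be placed exactly at the center of $I_j$. Since $|A_0(I_j)| = \alpha \rho$ and $|I_j|=\rho$, this leaves a buffer of $(1-\alpha)\rho/2$ on each side of $A_0(I_j)$ inside $I_j$. Set $\epsilon := (1-\alpha)\rho/4$ (any $\epsilon < (1-\alpha)\rho/2$ will do). If $I$ is an interval of length $\rho$ whose endpoints are each within $\epsilon$ of the corresponding endpoints of $I_1$, then $I$ is a translate of $I_1$ by some $t$ with $|t| \le \epsilon < (1-\alpha)\rho/2$, so $A_0(I_1) \subseteq I$. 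The analogous statement holds for $I_2$.

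With this in hand, I would define Alice's strategy on an arbitrary admissible $I$ as follows: on move zero she plays $A_0(I_1)$ if $I$ is $\epsilon$-close to $I_1$ and $A_0(I_2)$ if $I$ is $\epsilon$-close to $I_2$ (either choice is fine in the overlap, if any). This is a legal Alice response by the containment just proved, and it has exactly the same diameter, $\alpha\rho$, as required. From move one onward the game tree is literally identical to the one analyzed in Lemma \ref{abi2}, so Alice simply follows that strategy; the resulting intersection point lies in the partial Vitali set $P_{I_1} \cup P_{I_2}$. Extending this partial Vitali set to a full Vitali set via the Axiom of Choice, as in the Remark after Definition \ref{partial}, produces a single Vitali set that is $(\alpha,\beta,I)$-winning for every $I$ of length $\rho$ whose endpoints are $\epsilon$-close to those of $I_1$ or of $I_2$.

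The main obstacle here is essentially bookkeeping rather than mathematical depth: everything reduces to the observation that Alice's first move is a \emph{fixed} interval that need only literally fit inside whatever Bob plays on move zero. The one care point is to verify that a single $\epsilon$ works uniformly across perturbations of \emph{both} $I_1$ and $I_2$, but this is immediate since the buffer $(1-\alpha)\rho/2$ depends only on $\alpha$ and $\rho$.
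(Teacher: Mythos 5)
Your proof is correct and follows the same basic strategy as the paper: reuse the Lemma~\ref{abi2} construction verbatim and observe that only the very first move is sensitive to the perturbation of the initial interval. The paper instead achieves the robustness by slightly shrinking the first-level $\omega$ intervals so they remain valid for every $\epsilon$-close $I$; your variant, which fixes Alice's first move at the center of $I_1$ (resp.\ $I_2$) and uses the $(1-\alpha)\rho/2$ buffer to show that this single fixed move is legal for every $\epsilon$-close $I$, is a clean and slightly simpler way to accomplish the same thing, with the added benefit that your $\epsilon$ depends only on $\alpha$ and $\rho$, which is exactly the uniformity needed later in Corollary~\ref{epsclose2}.
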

\begin{proof} Repeat the construction of Lemma \ref{abi2} but ensure that the $\omega$ intervals are valid (are contained in any Bob move) for each $I$ that is $\epsilon$ close, i.e. whose endpoints are at most $\epsilon$ away from the endpoints of $I_1$ or $I_2$. Now, for the first level of the construction, we have $\omega$ intervals of length $(4-2\epsilon)(\alpha(\alpha\beta)\rho)$ that are contained by an initial Bob move in any interval $I$ that is $\epsilon$ close. Note that after the initial move, since we can use the same Alice moves, the $\omega$ intervals can be constructed just as before. Thus the partial Vitali set $P_{I_1} \cup P_{I_2}$ is $(\alpha,\beta,I)$-winning for any $I$ that is $\epsilon$ close to $I_1$ or $I_2$. \end{proof}
\begin{corollary}\label{epsclose2} If $\alpha < \frac{1}{12}, \alpha < \beta,$ and $\rho > 0$ and we fix countably many intervals $I_1, I_2, \cdots$ all of length $\rho$, then there is a Vitali set and an $\epsilon > 0$ such that if $I$ is an interval of length $\rho$ whose endpoints are $\epsilon$ close to one of $I_1, I_2, \cdots$ then the Vitali set is $(\alpha,\beta,I)$-winning.
\end{corollary}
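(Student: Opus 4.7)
The plan is to combine the enumeration trick from Corollary \ref{countably} with the $\epsilon$-shortening modification from Lemma \ref{epsclose}. Since all of the intervals $I_n$ share the same length $\rho$, I expect a single $\epsilon > 0$ depending only on $\alpha, \beta, \rho$ to suffice uniformly: the $\epsilon$-shortening only affects the first level of the tree construction (subsequent levels build $\omega$ intervals around the Alice moves that were fixed at the previous stage), and the condition imposed at that first level is simply that the shortened $\omega$-intervals of length $(4-2\epsilon)\alpha(\alpha\beta)\rho$ still fit inside any Bob move whose endpoints are $\epsilon$-close to some $I_n$.

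First I would enumerate triples $(q_j, k_j, l_j)$ with $q_j \in \mathbb{Q}$ and $k_j, l_j \in \mathbb{N}$, exactly as in Corollary \ref{countably}. For each $I_n$ separately, I run the tree construction of Lemma \ref{abi} to produce $P_{I_n}$, except that every first-level $\omega$-interval is replaced by its $\epsilon$-shortened version from Lemma \ref{epsclose}. At stage $j$ of the global construction I invoke the recipe of Lemma \ref{abi2} for the pair $(I_{k_j}, I_{l_j})$ and the rational $q_j$, choosing Alice's moves $A^{n_j}_i(I_{k_j})$ and $A^{n_j}_{i'}(I_{l_j})$ inside their respective $\omega$-intervals so that $A^{n_j}_i(I_{k_j}) + q_j$ is disjoint from $A^{n_j}_{i'}(I_{l_j})$.

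The disjointness conditions from the odd stages ensure that for every pair $n \neq m$ and every rational $q$, no point of $P_{I_n}$ differs from a point of $P_{I_m}$ by $q$; the even-stage conditions handle the within-$P_{I_n}$ case. Hence $\bigcup_n P_{I_n}$ is a partial Vitali set, which can be extended by the Axiom of Choice to a full Vitali set $V$. By construction, $P_{I_n} \subseteq V$ provides Alice with a winning strategy against any Bob first move $I$ that is $\epsilon$-close to $I_n$, as in Lemma \ref{epsclose}.

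The main obstacle will be the bookkeeping needed to verify that a single $\epsilon$ works simultaneously for all countably many $I_n$ and that the various disjointness requirements can be satisfied consistently across the countably many stages. The uniformity of $\epsilon$ follows because the first-level $\omega$-interval lengths depend only on $\alpha, \beta, \rho$, not on the particular $I_n$. The consistency of the disjointness conditions follows from the geometric observation already used in Lemmas \ref{abi} and \ref{abi2}: each shifted $\omega$-interval meets at most one other $\omega$-interval, and each $\omega$ has diameter more than three times that of the Alice move it contains, leaving ample room to choose the Alice move avoiding any one pre-specified shifted sibling.
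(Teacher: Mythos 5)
Your proposal is correct and follows essentially the same route as the paper, which proves this corollary in one line by repeating the argument of Lemma \ref{epsclose} while diagonalizing over the $I_n$'s as in Corollary \ref{countably}. Your additional observation that a single $\epsilon$ works uniformly because all the $I_n$ share the common length $\rho$ (so the $\epsilon$-shortening affects only the first level of each tree) is exactly the point implicit in the paper's argument.
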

\begin{proof} Repeat the argument of Lemma \ref{epsclose} but diagonalize over the $I_n$'s as in Corollary \ref{countably}.
\end{proof}
\begin{theorem}\label{abrho} If $\alpha < \frac{1}{12}$, $\alpha < \beta$, and $\rho > 0$ there exists a Vitali set $V$ which is $(\alpha,\beta,\rho)$-winning.
\end{theorem}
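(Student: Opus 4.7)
My plan is to reduce Theorem \ref{abrho} directly to Corollary \ref{epsclose2} using the density of the rationals in $\mathbb{R}$. The main observation is that the set of all possible Bob first moves of diameter $\rho$ is parameterized by the left endpoint and therefore by $\mathbb{R}$, which is separable; so a suitably chosen countable family of intervals has $\epsilon$-neighborhoods (in the sense of the corollary) that exhaust every possible first move, no matter how small the given $\epsilon$ turns out to be.

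Concretely, I would first enumerate $\mathbb{Q}$ as $\{q_1, q_2, \ldots\}$ and set $I_n := [q_n, q_n + \rho]$, producing a countable family of intervals of length $\rho$ whose left endpoints are dense in $\mathbb{R}$. Applying Corollary \ref{epsclose2} to this family (using $\alpha < \frac{1}{12}$ and $\alpha < \beta$) yields a single Vitali set $V$ together with an $\epsilon > 0$ such that $V$ is $(\alpha,\beta,I)$-winning for every interval $I$ of length $\rho$ whose endpoints are within $\epsilon$ of those of some $I_n$.

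Now I would verify that this $V$ is the desired $(\alpha,\beta,\rho)$-winning set. Given any Bob first move $I = [a, a+\rho]$, density of $\mathbb{Q}$ produces a rational $q_n$ with $|a - q_n| < \epsilon$; then both endpoints of $I$ and $I_n$ differ by less than $\epsilon$, so $I$ is $\epsilon$-close to $I_n$ and $V$ is $(\alpha,\beta,I)$-winning. Since this holds for every initial interval of length $\rho$, the set $V$ is $(\alpha,\beta,\rho)$-winning.

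The step that might look subtle but is actually free is the order of quantifiers: $\epsilon$ is produced \emph{after} the countable family is fixed, so one might worry it depends badly on the choice of family. This is why using the rationals is convenient --- the left endpoints of $\{I_n\}$ are $\delta$-dense for every $\delta > 0$, hence certainly $\epsilon$-dense for whatever $\epsilon > 0$ the construction returns. No further technical obstacle arises; the constructive machinery of Lemmas \ref{abi}--\ref{epsclose} and the diagonalization of Corollary \ref{epsclose2} already do all the work.
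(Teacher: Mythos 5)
Your proof is correct and is essentially identical to the paper's: both take the countable family of length-$\rho$ intervals with rational left endpoints, apply Corollary \ref{epsclose2} to obtain $V$ and $\epsilon$, and use density of $\mathbb{Q}$ to conclude that every initial interval of length $\rho$ is $\epsilon$-close to some $I_n$. Your extra remark about the order of quantifiers is a worthwhile clarification but does not change the argument.
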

\begin{proof} Let $I_1, I_2, \cdots$ be the intervals with a rational left endpoint of length $\rho$. Let $\epsilon$ and a Vitali set $V$ be as in Corollary \ref{epsclose2}. Then every interval $I$ of length $\rho$ is $\epsilon$ close to one of the $I_n$'s. So by Corollary \ref{epsclose2}, $V$ is $(\alpha,\beta,\rho)$-winning.
\end{proof}
\begin{lemma}\label{i1i2} Let $\alpha < \frac{1}{12}, \beta > \frac{\alpha}{(1-8\alpha)^2}$, and let $I_1, I_2$ be given intervals of length $\rho_1$ and $\rho_2$ respectively. Then there is a Vitali set $V$ that is $(\alpha, \beta, I_1)$-winning and $(\alpha, \beta, I_2)$-winning.
\end{lemma}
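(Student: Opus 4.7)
The plan is to adapt the proof of Lemma~\ref{abi2}; the new complication is that $I_1$ and $I_2$ now have possibly distinct lengths $\rho_1,\rho_2$. As there, I will construct partial Vitali sets $P_{I_1}$ and $P_{I_2}$ via the tree construction of Lemma~\ref{abi} on each side, and arrange along the way that $P_{I_1}\cup P_{I_2}$ is still a partial Vitali set; extending by AC then produces the Vitali set $V$, which is $(\alpha,\beta,I_i)$-winning for $i=1,2$. The outer bookkeeping is parallel to Corollary~\ref{countably}: enumerate $\mathbb{Q}$, and at stage $j$ handle the rational $q_j$ via either a within-side Lemma~\ref{abi} step or a cross-side Lemma~\ref{abi2}-style step.

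The obstruction to copying Lemma~\ref{abi2} verbatim is that at a common level $n$, the $\omega$-intervals on the two sides have different lengths $4\alpha(\alpha\beta)^n\rho_1$ and $4\alpha(\alpha\beta)^n\rho_2$, so the identity-sized ``$\omega_i^n(I_1)+q_j$ meets at most one $\omega_{i'}^n(I_2)$'' step used in Lemma~\ref{abi2} can fail at any common level. My fix is \emph{scale matching}: at the stage handling $q_j$, pick \emph{distinct} levels $n_1(j), n_2(j)$ on the two sides so that the $\omega$-sizes $\ell_i := 4\alpha(\alpha\beta)^{n_i(j)}\rho_i$ are as close as possible. As $n_2-n_1$ varies over $\mathbb{Z}$, the ratio $\ell_2/\ell_1=(\alpha\beta)^{n_2-n_1}(\rho_2/\rho_1)$ runs through a geometric progression with common ratio $\alpha\beta$, so an appropriate choice achieves $\ell_2/\ell_1\in[\sqrt{\alpha\beta},1/\sqrt{\alpha\beta}]$.

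The hypothesis $\beta>\alpha/(1-8\alpha)^2$ is algebraically equivalent to $\sqrt{\alpha\beta}(1-8\alpha)>\alpha$, and this is precisely the strength needed so that at the matched levels $\ell_1+\ell_2$ stays below the left-endpoint spacing $(1-4\alpha)(\alpha\beta)^{n}\rho$ of the $\omega$'s on the smaller side. Together with $\alpha<1/12$ (which, via the bound $8\alpha/(1-4\alpha)<1$, handles the larger-$\omega$ side automatically), this lets the ``shift meets at most one other $\omega$'' argument of Lemma~\ref{abi} be rerun across the two sides at the matched pair of levels. Consequently the cross-side overlap graph (edge whenever $\omega_i^{n_1(j)}(I_1)+q_j$ meets $\omega_{i'}^{n_2(j)}(I_2)$) is a (bipartite) matching: for each edge, pick $A_i^{n_1(j)}(I_1)\subset\omega_i^{n_1(j)}(I_1)$ and $A_{i'}^{n_2(j)}(I_2)\subset\omega_{i'}^{n_2(j)}(I_2)$ of the game-prescribed sizes so that $A_i^{n_1(j)}(I_1)+q_j$ is disjoint from $A_{i'}^{n_2(j)}(I_2)$; the $3|A|$ of slack inside each $\omega$ easily accommodates the single obstacle to avoid.

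Diagonalizing over $j$ exactly as in Corollary~\ref{countably}, with Lemma~\ref{abi} handling the within-side Vitali constraints and the matched-level step above handling the cross-side ones, yields $P_{I_1}\cup P_{I_2}$ as a partial Vitali set, and AC completes the argument.

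The step I expect to be the main obstacle is the geometric bookkeeping linking $\beta>\alpha/(1-8\alpha)^2$ to the scale-matching analysis: verifying carefully that the matched ratio $\ell_2/\ell_1\in[\sqrt{\alpha\beta},1/\sqrt{\alpha\beta}]$ is indeed small enough for the Lemma~\ref{abi} single-intersection argument to rerun across mismatched $\omega$-sizes, and confirming that the $A$-placement at the matched levels has enough room in the smaller $\omega$ to satisfy the required cross-side disjointness.
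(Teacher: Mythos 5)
Your overall strategy matches the paper's: run the two games to levels $m,n$ at which the interval sizes on the two sides have ratio in $[\sqrt{\alpha\beta},1/\sqrt{\alpha\beta}]$, then secure cross-side disjointness of the Alice moves at those matched levels, interleaving this with the within-side steps of Lemma~\ref{abi}. The gap is in your mechanism for the cross-side step. You claim that $\beta>\alpha/(1-8\alpha)^2$, i.e.\ $\sqrt{\alpha\beta}\,(1-8\alpha)>\alpha$, guarantees that the cross-side overlap graph on the $\omega$-intervals is a matching. But ``$\omega_i^{n_1}(I_1)+q_j$ meets at most one $\omega_{i'}^{n_2}(I_2)$'' requires the \emph{full} length $\ell_1=4\alpha(\alpha\beta)^{n_1}\rho_1$ of the larger $\omega$ to lie below the gap $(1-8\alpha)(\alpha\beta)^{n_2}\rho_2=\frac{1-8\alpha}{4\alpha}\ell_2$ between consecutive smaller $\omega$'s, i.e.\ $\ell_1/\ell_2<\frac{1-8\alpha}{4\alpha}$. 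Since the matched ratio can be as large as $1/\sqrt{\alpha\beta}$, this needs $\sqrt{\alpha\beta}\,(1-8\alpha)>4\alpha$, i.e.\ $\beta>16\alpha/(1-8\alpha)^2$ --- a factor of $16$ stronger than the hypothesis. So the matching claim, and with it your single-obstacle placement, can fail precisely in part of the regime the lemma is supposed to cover. (A secondary point: even where the matching holds, the shifted larger-side Alice move can have length exceeding the entire smaller $\omega$ when $\ell_1/\ell_2>4$, so the adjustment must be made on the larger side, not symmetrically.)

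The paper avoids needing the matching property for the $\omega$'s altogether. It lets Alice play in the middle of every $\omega$ on the smaller-scale side, so that consecutive smaller-side Alice moves are separated by gaps of about $(1-8\alpha)(\alpha\beta)^{n}\rho_2$, and then places the larger-side Alice move (length $\alpha(\alpha\beta)^{m}\rho_1$, one quarter of $\ell_1$) inside its own $\omega$ in one of those gaps, thereby avoiding all the shifted smaller-side moves at once. The relevant comparison is then Alice-move length versus gap, $\alpha(\alpha\beta)^m\rho_1<(1-8\alpha)(\alpha\beta)^n\rho_2$, and with the matched ratio this is exactly $\sqrt{\alpha\beta}\,(1-8\alpha)>\alpha$, i.e.\ the stated hypothesis. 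If you replace your matching/single-obstacle step with this placement (or are content with the weaker threshold $\beta>16\alpha/(1-8\alpha)^2$), the rest of your outline goes through.
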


\begin{proof} After round $m$ of the $I_1$ game, the length of Alice's interval is $\alpha(\alpha\beta)^m\rho_1$, and likewise after round $n$ of the $I_2$ game, the length of Alice's interval is $\alpha(\alpha\beta)^n\rho_2$. Choose m and n such that $\sqrt{\alpha\beta} \leq \frac{\rho_2}{\rho_1}(\alpha\beta)^{n-m} \leq \frac{1}{\sqrt{\alpha\beta}}$. Enumerate the rationals, $q_1, q_2, \cdots \in \mathbb{Q}$ as before. Without loss of generality, the lengths of the moves $\alpha(\alpha\beta)^m\rho_1$ and $\alpha(\alpha\beta)^n\rho_2$ are both less than $q_j$, some rational. At even rounds $2j$, we will play exactly as in the proof of Lemma \ref{abi2}. For odd rounds $2j+1$, we will now modify the argument of Lemma \ref{abi2}. First choose $m_{2j+1}$ and $n_{2j+1}$ such that $\sqrt{\alpha\beta} \leq \frac{\rho_2}{\rho_1}(\alpha\beta)^{n_{2j+1}-m_{2j+1}} \leq \frac{1}{\sqrt{\alpha\beta}}$. Without loss of generality, assume that $\alpha(\alpha\beta)^{n_{2j+1}}\rho_2<\alpha(\alpha\beta)^{m_{2j+1}}\rho_1$. Define the $\omega^{m_{2j+1}}_i$ and $\omega^{n_{2j+1}}_i$ intervals just as before. Consider one of the $\omega^{m_{2j+1}}_i$'s. Just as before, we have $|\omega^{m_{2j+1}}_i| = 4\alpha(\alpha\beta)^{m_{2j+1}}\rho_1$. On the right game, let Alice move in the middle of each $\omega^{n_{2j+1}}_i$. The gap between any 2 moves by Alice is at least $\rho_2\alpha(\alpha\beta)^{n_{2j+1}}(\beta-8\alpha\beta)$. Alice's move on the left game must be of size $\alpha(\alpha\beta)^{m_{2j+1}}\rho_1$. We require that $\alpha(\alpha\beta)^{m_{2j+1}}\rho_1 < \rho_2\alpha(\alpha\beta)^{n_{2j+1}}(\beta-8\alpha\beta)$, which is true for all $\beta > \frac{\alpha}{(1-8\alpha)^2}$. This guarantees that the gap between Alice's right moves is greater than the size of Alice's left moves. Note also that the the sum of the lengths of two of Alice's left moves and one of Alice's right moves is still less than the size of $\omega^{m_{2j+1}}_i$, and thus we can pick Alice's left moves such that they are disjoint from $A^{n_{2j+1}}_{i^\prime} - q_j$ for all $i$. This defines Alice's moves $A^{m_{2j+1}}_i$. Thus, we have ensured that $A^{m_{2j+1}}_i + q_j$ is disjoint from $A^{n_{2j+1}}_{i^\prime}$ for all $i^\prime$. This constructs a partial Vitali set which is $(\alpha, \beta, I_1)$-winning and $(\alpha, \beta, I_2)$-winning.
\end{proof}

\begin{definition}\label{thick} Given an interval $I = [a,b]$, a $\delta$-thickening of $I$ is the interval $[a - \delta(b-a), b + \delta(b-a)]$.
\end{definition}

\begin{lemma}\label{betaeps} If $\alpha < \frac{1}{12}$, $\beta > \frac{\alpha}{(1-5\alpha)^2}$, then there is a $\delta = \delta(\alpha, \beta)$ such that if $I_1, I_2$ are given, there is a partial Vitali set which is  $(\alpha, \beta, I)$-winning for any $I$ that is less than a $\delta$-thickening of $I_1$ or of $I_2$
\end{lemma}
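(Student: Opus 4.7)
The plan is to adapt the construction of Lemma \ref{i1i2} in the same way that Lemma \ref{epsclose} adapts Lemma \ref{abi2}: shorten and slightly reshape the first-level $\omega$-intervals so that they continue to lie inside every Bob first move $I$ drawn from a $\delta$-thickening of $I_1$ or of $I_2$. Since the rest of the construction (after the first level) depends only on Alice's chosen moves inside the $\omega$'s, and not on the exact initial Bob move, this single modification is enough to turn the ``exact-$I_j$'' guarantee of Lemma \ref{i1i2} into a ``thickened-$I_j$'' guarantee.

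Fix $\alpha < 1/12$ and $\beta > \alpha/(1-5\alpha)^2$. I would choose $\delta = \delta(\alpha,\beta) > 0$ small enough to meet two requirements simultaneously. First, the slightly shortened first-level $\omega$-intervals of each $I_j$-game (built inside $I_j$ using the $\delta$-parameter in place of the $\epsilon$ of Lemma \ref{epsclose}) must still lie inside every $I$ whose endpoints are within a $\delta$-thickening of $I_j$. Second, the slack introduced by this shortening must be small enough that the critical odd-stage gap inequality of Lemma \ref{i1i2} continues to hold under the weaker hypothesis $\beta > \alpha/(1-5\alpha)^2$. The move from the denominator factor $(1-8\alpha)^2$ of Lemma \ref{i1i2} to $(1-5\alpha)^2$ here reflects a sharper bookkeeping that becomes available once one accounts precisely for where Alice's centered moves sit inside each $\omega$, and the $\delta$-slack from the thickening can be absorbed into this margin.

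With $\delta$ so calibrated, the two-game construction of Lemma \ref{i1i2} proceeds almost verbatim: at even stages $2j$, apply Lemma \ref{abi2} separately to each of $P_{I_1}$ and $P_{I_2}$ to keep them partial Vitali sets; at odd stages $2j+1$, pick $m_{2j+1}$ and $n_{2j+1}$ balancing the two scales so that $\sqrt{\alpha\beta}\le (\rho_2/\rho_1)(\alpha\beta)^{n-m}\le 1/\sqrt{\alpha\beta}$, center Alice's moves in each $\omega^{n_{2j+1}}_{i'}(I_2)$, and in each $\omega^{m_{2j+1}}_i(I_1)$ choose Alice's move disjoint from every translate $A^{n_{2j+1}}_{i'}(I_2) - q_j$. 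The refined gap inequality ensures this last choice is possible. The union $P_{I_1}\cup P_{I_2}$ is then a partial Vitali set, and by the conservative first-level placement it is $(\alpha,\beta,I)$-winning for every $I$ inside the $\delta$-thickening of $I_1$ or of $I_2$.

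The main obstacle is the sharpened gap inequality itself: one must recompute the minimum distance between consecutive centered Alice moves in the level-$n_{2j+1}$ $\omega$'s of the $I_2$-game, showing that the coarse factor $(1-8\alpha)$ of Lemma \ref{i1i2} can be replaced by $(1-5\alpha)-O(\delta)$ when the centering of Alice's moves inside each $\omega$ is accounted for carefully. Once this sharper estimate is in hand, the choice of $\delta$ is routine, and the rest of the proof is a verbatim repetition of the Lemma \ref{i1i2} argument applied to the shortened first-level $\omega$ system.
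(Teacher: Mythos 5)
Your proposal follows essentially the same route as the paper: shrink the first-level $\omega$-intervals to concentric subintervals that remain inside every Bob move drawn from a $\delta$-thickening of $I_1$ or $I_2$, then rerun the two-game construction of Lemma \ref{i1i2}, absorbing the $(1+2\delta)$ length inflation of Alice's moves into a sharpened gap estimate. The one step you defer --- recomputing the center-to-center spacing of Alice's $I_2$-moves and checking $d - |I_2|\alpha(\alpha\beta)(1+2\delta) \geq |I_1|\alpha(\alpha\beta)(1+2\delta)$, which reduces via $\frac{|I_2|}{|I_1|}\geq\sqrt{\alpha\beta}$ to $\beta \geq \alpha\bigl(\tfrac{1+2\delta}{1-5\alpha-2\alpha\delta}\bigr)^2$ and hence to the stated hypothesis for small $\delta$ --- is exactly the computation the paper carries out, and your predicted form $(1-5\alpha)-O(\delta)$ for the denominator is correct.
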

\begin{proof} 
For $I_1$ and $I_2$, define $\omega$ intervals as before. Define $\omega^\prime$ intervals such that they have the same center as the $\omega$ intervals but with length $\frac{7}{2}\alpha(\alpha\beta)^n$ instead of $4\alpha(\alpha\beta)^n$. As in Lemma \ref{i1i2}, we can play arbitrarily in the two games until the ratio of Alice's moves in the two games are between $\sqrt{\alpha\beta}$ and $\frac{1}{\sqrt{\alpha\beta}}$. Let Alice's moves in the $I_2$ game be the smaller moves. Simplifying notation, let Alice's first moves be $A_1 (I_1)$ and $A_1 (I_2)$. So $1 \leq \frac{A_1 (I_1)}{A_1 (I_2)} \leq \frac{1}{\sqrt{\alpha\beta}}$.

Let $\delta \leq \frac{1}{4}\alpha\frac{\beta}{1-2\beta}$. We claim that, for any $I$ that is less than or equal to a $\delta$-thickening of $I_1$, each $\omega_i (I)$ interval contains the $\omega^\prime_i (I_1)$ interval. Consider the interval $\omega_1 (I)$. If we take the left endpoint of $I_1$ to be 0, the right endpoint of $\omega_1 (I)$ is $(1 + 2\delta)\alpha\beta|\alpha I_1| - \delta\alpha|\alpha I_1|$, which is greater than the right endpoint of $\omega^{\prime}_1 (I_1)$, $\alpha\beta|\alpha I_1| - \frac{1}{4}\alpha^2\beta|\alpha I_1|$ (which follows from the value of $\delta$). A similar inequality holds for the rightmost $\omega$. For all other $\omega$ intervals, the claim is immediate since $|\omega_i (I)| \geq |\omega_i(I_1)|$ for all $i$.

As before, we can pick the next Alice moves inside each $\omega^\prime$ interval. For each $\omega^\prime_i (I_2)$, center the Alice move $A_2$ in the center of $\omega^\prime_i (I_2)$. For each $\omega^\prime_i (I_1)$, we now define $A_2 (I_1)$. If $I$ is an $\delta$-thickening of $I_1$, $A_2$ of $I_1$ will have the same center as $A_2$ of $I$, and likewise for if $I$ is a $\delta$-thickening of $I_2$. Recall that the length of $\omega^\prime (I_1)$ is $\frac{7}{2}\alpha(\alpha\beta) |I_1|$. Consider $q_0 \in \mathbb{Q}$ again, and let $C_2$ be the center points of the $A_2 (I_2)$ moves. Note that the distance between any two distinct points in $C_2$ is at least $d=|I_2|\alpha\beta-4|I_2|\alpha(\alpha\beta)$. If the distance between the left endpoint of $\omega^\prime (I_1)$ and the least point of $C_2$ in the interval $\omega^\prime (I_1)$ (if any) is at least $|I_1|\alpha(\alpha\beta) \cdot (1+2\delta)+|I_2|\alpha(\alpha\beta) \cdot (1+2\delta)$, then $A_2 (I_1)$ will have its left endpoint as the left endpoint of $\omega^\prime (I_1)$. Otherwise, $A_2 (I_1)$ will be centered at $|I_1|\alpha(\alpha\beta) \cdot (1+2\delta)+|I_2|\alpha(\alpha\beta) \cdot (1+2\delta) + \frac{1}{2}|I_1|\alpha(\alpha\beta) \cdot (1+2\delta)$. 
\begin{center}
\begin{figure}
\begin{tikzpicture}
    \draw[<->] (-8,0) -- (4,0);

    %w_1 (I_1)
    \node at (-7.2, 1.3) {$\omega_1(I_1)$};
    \draw (-7, 1) -- (-7.3, 1) -- (-7.3, -1) -- (-7, -1);
    \draw (-3, 1) -- (-2.7, 1) -- (-2.7, -1) -- (-3, -1);

    %w_1' (I_1)
    \node at (-6.5, -1.3) {$\omega_1^\prime(I_1)$};
    \draw (-6.5, 1) -- (-6.8, 1) -- (-6.8, -1) -- (-6.5, -1);
    \draw (-3.5, 1) -- (-3.2, 1) -- (-3.2, -1) -- (-3.5, -1);

    %A-2 (I2) + q
    \node at (-6, 0.87) {\tiny{$A_2(I_2) + q_1$}};
    \draw (-6.3, 3/4) -- (-6.5, 3/4) -- (-6.5, -3/4) -- (-6.3, -3/4);
    \draw (-6, 3/4) -- (-5.8, 3/4) -- (-5.8, -3/4) -- (-6, -3/4);

    \node at (-4, 0.87) {\tiny{$A_2(I_2) + q_2$}};
    \draw (-3.7, 3/4) -- (-3.5, 3/4) -- (-3.5, -3/4) -- (-3.7, -3/4);
    \draw (-4, 3/4) -- (-4.2, 3/4) -- (-4.2, -3/4) -- (-4, -3/4);

    %A2 (I1)
    \node at (-5, -0.9) {\tiny{$A_2(I_1)$}};
    \draw[thick] (-4.7, 3/4) -- (-4.5, 3/4) -- (-4.5, -3/4) -- (-4.7, -3/4);
    \draw[thick] (-5.3, 3/4) -- (-5.5, 3/4) -- (-5.5, -3/4) -- (-5.3, -3/4);

    \node at (-2, 0.3) {\Large{$\cdots$}};

    %w_1 (I_1)
    \node at (-1.2, 1.3) {$\omega_n(I_1)$};
    \draw (3, 1) -- (3.3, 1) -- (3.3, -1) -- (3, -1);
    \draw (-1, 1) -- (-1.3, 1) -- (-1.3, -1) -- (-1, -1);

    %w_1' (I_1)
    \node at (-0.5, -1.3) {$\omega_n^\prime(I_1)$};
    \draw (2.5, 1) -- (2.8, 1) -- (2.8, -1) -- (2.5, -1);
    \draw (-0.5, 1) -- (-0.8, 1) -- (-0.8, -1) -- (-0.5, -1);

    %A-2 (I2) + q
    \node at (0.2, 0.87) {\tiny{$A_2(I_2) + q_{2n-1}$}};
    \draw (2.3, 3/4) -- (2.5, 3/4) -- (2.5, -3/4) -- (2.3, -3/4);
    \draw (2, 3/4) -- (1.8, 3/4) -- (1.8, -3/4) -- (2, -3/4);

    \node at (2, 0.87) {\tiny{$A_2(I_2) + q_{2n}$}};
    \draw (-0.3, 3/4) -- (-0.5, 3/4) -- (-0.5, -3/4) -- (-0.3, -3/4);
    \draw (0, 3/4) -- (0.2, 3/4) -- (0.2, -3/4) -- (0, -3/4);

    %A2 (I1)
    \node at (1, -0.9) {\tiny{$A_2(I_1)$}};
    \draw[thick] (0.7, 3/4) -- (0.5, 3/4) -- (0.5, -3/4) -- (0.7, -3/4);
    \draw[thick] (1.3, 3/4) -- (1.5, 3/4) -- (1.5, -3/4) -- (1.3, -3/4);

\end{tikzpicture} 
\end{figure}
\end{center}
We first claim that for any possible expansion $I$ of $I_1$, the right endpoint of $A_2 (I)$ is to the left of the right endpoint of $\omega^\prime (I_1)$. That is, we must have $\frac{|I_2|}{|I_1|}(1+2\delta) \leq \frac{3}{2} - 4\delta$. Since $\frac{|I_2|}{|I_1|} \leq 1$, it suffices to have $(1+2\delta) \leq \frac{3}{2} - 4\delta$, i.e. $\delta \leq \frac{1}{12}$. In this case, $A_2 (I)$ will be a valid move for Alice, that is, it is inside the $\omega^\prime (I_1)$ interval.

Now we check that $A_2 (I)$ will not intersect $A_2 (I^\prime) + q_0$, where $I$ and $I^\prime$ are expansions of $I_1$ and $I_2$ respectively. Either this is clear or the center of $A_2 (I)$ is at the midpoint of two consecutive elements in $C_2$ of $\omega^\prime$. In this case, the interval between the two corresponding elements of $A_2 (I^\prime)$ is at least $d - |I_2|\alpha(\alpha\beta)(1+2\delta)$ and the length of $A_2 (I)$ is at most $|I_1|\alpha(\alpha\beta)(1+2\delta)$ So it suffices to check that $d - |I_2|\alpha(\alpha\beta)(1+2\delta) \geq |I_1|\alpha(\alpha\beta)(1+2\delta)$ i.e. $|I_2|\alpha\beta-4|I_2|\alpha(\alpha\beta) \geq |I_1 + I_2| \alpha (\alpha\beta)(1+2\delta)$. This simplifies to $\frac{|I_2|}{|I_1|} \geq \frac{\alpha(1+2\delta)}{1-4\alpha - \alpha(1+2\delta)}$ Since $\frac{|I_2|}{|I_1|} \geq \sqrt{\alpha\beta}$, it suffices to check that $\sqrt{\alpha\beta} \geq \frac{\alpha(1+2\delta)}{1-4\alpha - \alpha(1+2\delta)}$, which simplifies to $\beta \geq \alpha \cdot (\frac{\alpha(1+2\delta)}{1-4\alpha - \alpha(1+2\delta)})^2$. Since $\beta > \frac{\alpha}{(1-5\alpha)^2}$, we may choose $\delta$, depending only on $\alpha$ and $\beta$, small enough such that the inequality is satisfied. Thus, $A_2 (I)$ will not intersect $A_2 (I^\prime) + q_0$. This completes the definition of Alice's moves $A_2 (I_1)$ and $A_2 (I_2)$, and thus also $A_2 (I)$ and $A_2 (I^\prime)$ for any expansions $I$ and $I^\prime$. The future moves are now defined in exactly the same way starting from $A_2 (I_1)$ and $A_2 (I_2)$ instead of $A_1 (I_1)$ and $A_1 (I_2)$ using the rationals $q_i$ at the level $2i + 2$ (and at odd levels we construct $A_i$ as in Lemma \ref{abi}). Thus, we have constructed a partial Vitali set on which any given $I$ that is less than a $\delta$-thickening of $I_1$ or of $I_2$ is $(\alpha,\beta,I)$-winning.

\end{proof}
\begin{theorem}{\label{main}} If $\alpha < \frac{1}{12}$ and $\beta > \frac{\alpha}{(1-5\alpha)^2}$ then there is a Vitali set that is $(\alpha, \beta)$-winning.
\end{theorem}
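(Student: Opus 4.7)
The plan is to combine Lemma \ref{betaeps} with a diagonalization analogous to Corollaries \ref{countably} and \ref{epsclose2} so that one partial Vitali set handles every possible first move by Bob, not just moves of a single prescribed length as in Theorem \ref{abrho}. The crucial new ingredient is that in Lemma \ref{betaeps} the $\delta$-thickening tolerance allows the length of $I$ to differ from the length of the reference interval, which is exactly what we need to escape the fixed-$\rho$ setting of Theorem \ref{abrho}.

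First, I would upgrade Lemma \ref{betaeps} from two reference intervals to countably many $I_1, I_2, \ldots$ in the same way that Corollary \ref{countably} upgrades Lemma \ref{abi2}: enumerate triples $(k_j, l_j, q_j)$ with $k_j, l_j \in \mathbb{N}$ and $q_j \in \mathbb{Q}$, and at stage $j$ run the pairwise construction of Lemma \ref{betaeps} on $(I_{k_j}, I_{l_j})$ for the rational $q_j$. The union of the resulting $P_{I_n}$'s is a partial Vitali set $P$ such that Alice wins from any first move $I$ lying inside the $\delta$-thickening of any $I_n$, where $\delta = \delta(\alpha, \beta)$ is the constant supplied by Lemma \ref{betaeps}.

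Second, I would choose the countable family $\{I_n\}$ so that its $\delta$-thickenings cover every closed interval of positive diameter. A natural choice is: for each $k \in \mathbb{Z}$, include intervals of length $(1+\delta)^k$ with left endpoints drawn from a countable dense subset of $\mathbb{R}$ whose spacing is less than $\delta(1+\delta)^k$. Any interval $I$ of length $\rho$ lies in some scale $[(1+\delta)^k, (1+\delta)^{k+1}]$ and is then contained in the $\delta$-thickening of an appropriately placed $I_n$ at that scale, the positional control coming from the denseness of the chosen endpoints. Given Bob's arbitrary first move $I$, Alice selects such an $I_n$ and follows the strategy associated to it; extending $P$ to a full Vitali set $V$ via the Axiom of Choice preserves the winning property, because the point $\bigcap_n A_n$ already lies in $P \subseteq V$.

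The main obstacle is verifying that the chosen scale spacing and endpoint density genuinely cover \emph{every} interval by $\delta$-thickenings, simultaneously with the one fixed $\delta$ produced by Lemma \ref{betaeps}; one must check that the ratio $(1+\delta)$ between consecutive scales and the spacing of endpoints within a scale are both small enough in terms of $\delta$. Once this covering is established, the rest of the argument is mechanical: the diagonalization repackages Lemma \ref{betaeps} as a countable statement, and the covering reduces an arbitrary initial move to one already handled.
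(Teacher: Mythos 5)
Your proposal is correct and takes essentially the same route as the paper: diagonalize Lemma \ref{betaeps} over a countable family of reference intervals, exactly as in Corollary \ref{epsclose2}, so that the $\delta$-thickenings of the family absorb every possible first move by Bob. The paper simply takes the family to be all intervals with rational endpoints, which makes the covering step immediate and avoids the scale-ratio and endpoint-spacing bookkeeping you identify as your main remaining obstacle.
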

\begin{proof}
Let $\delta = \delta (\alpha, \beta)$ as in Lemma \ref{betaeps}. Enumerate all the intervals $I_n$ with rational endpoints. As in the proof of Corollary \ref{epsclose2} but instead using Lemma \ref{betaeps}, we can construct a partial Vitali set which is $(\alpha, \beta, I)$-winning, where $I$ is any $\delta$-thickening of any $I_n$, and thus is $(\alpha,\beta)$-winning.
\end{proof}

\section{Bernstein, Luzin, and Sierpinski sets}{\label{NotVitali}}
We recall the definitions of the following sets.
\begin{definition}
A {\em Bernstein set} is a set $B\subseteq \mathbb{R}$ such that for every nonempty perfect set $P\subseteq \mathbb{R}$ we have $B\cap P\neq \emptyset$ and
$B^c \cap P \neq \emptyset$. A {\em Luzin} set 
$L\subseteq \mathbb{R}$ is a set such that for every for every meager set 
$A\subseteq \mathbb{R}$ we have $L\cap A$ is countable. A {\em Sierpinski set} 
$S\subseteq \mathbb{R}$ is a set such that for every measure $0$ set $A\subseteq \mathbb{R}$ we have $S\cap A$ is countable.

\end{definition}

We begin by making several observations on the winning property of these other pathological sets.
\begin{theorem}\label{bernstein}In any non-trivial case, Bernstein sets are not $(\alpha, \beta)$-determined.
\end{theorem}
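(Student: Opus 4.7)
The plan is to exploit the defining property of a Bernstein set $B$---both $B$ and its complement $B^c$ meet every nonempty perfect subset of $\mathbb{R}$, so neither of $B$ and $B^c$ contains a nonempty perfect set---and to rule out a winning strategy for each player separately, by extracting a perfect set of possible outcomes from any purported strategy.

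Suppose first that Alice has a winning strategy $\tau$ with target $B$. Fix any legal initial Bob move $B_0$. Each Bob continuation can be parametrized by a sequence $(t_n)_{n\geq 1} \in [0,1]^{\mathbb{N}}$, where $t_n$ records (in normalized form) the relative position of $B_n$'s left endpoint inside the preceding Alice move $A_{n-1}=\tau(B_0,\ldots,B_{n-1})$. The assignment sending $(t_n)_n$ to the unique point $\bigcap_n A_n$ is a continuous map from the compact connected space $[0,1]^{\mathbb{N}}$ into $\mathbb{R}$, so its image $P_\tau(B_0)\subseteq\mathbb{R}$ is a closed interval. The non-triviality hypothesis $\beta \geq 2 - 1/\alpha$ then rules out Alice ever forcing every play to terminate at one prescribed point: a centering-versus-shifting computation shows that if Alice pre-commits to a target point, Bob can, after finitely many moves, shift his intervals so as to push that point arbitrarily close to one of his endpoints and then escape it on the following move. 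Hence $P_\tau(B_0)$ has positive length, so it is a nonempty perfect set. But $\tau$ being winning forces $P_\tau(B_0)\subseteq B$, contradicting the Bernstein property of $B$.

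The argument for Bob is entirely symmetric. If $\sigma$ is a winning strategy for Bob with target $B$, let $B_0=\sigma(\emptyset)$ and parametrize Alice's legal continuations by $[0,1]^{\mathbb{N}}$ in the analogous fashion. The resulting outcome set $P_\sigma$ is again a closed interval in $\mathbb{R}$, non-degenerate by the dual non-triviality hypothesis $\alpha\geq 2-1/\beta$, which symmetrically prevents Bob from dictating a single point. Thus $P_\sigma$ is a nonempty perfect set contained in $B^c$, contradicting the Bernstein property of $B^c$. Combining the two halves, the $(\alpha,\beta)$-game with target $B$ is undetermined.

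The main technical obstacle I anticipate is the non-degeneracy step: verifying that in the non-trivial regime the interval $P_\tau(B_0)$ (respectively $P_\sigma$) actually has positive length. This reduces to an explicit one-dimensional geometric computation showing that in the non-trivial regime neither player can ``dictate'' a prescribed outcome point, a phenomenon already implicit in the dichotomy described in the Remark of Section 1. Once this non-degeneracy lemma is in place, the Bernstein-intersection argument above concludes the proof in a few lines, and no additional use of AC is required beyond that which produces $B$ in the first place.
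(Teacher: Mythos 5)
Your overall plan is the same as the paper's: from a winning strategy for Alice extract a nonempty perfect subset of $B$, from a winning strategy for Bob a nonempty perfect subset of $B^c$, and in either case contradict the defining property of a Bernstein set. (The paper simply cites \cite{AFS} for this.) However, your execution has a genuine gap at the central step.

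You claim that the map sending Bob's play-parameters $(t_n)_n \in [0,1]^{\mathbb{N}}$ to the outcome point $\bigcap_n A_n$ is continuous, and hence that the outcome set is a closed interval. This is false in general: a strategy $\tau$ is an \emph{arbitrary} function from finite sequences of moves to moves, with no continuity requirement whatsoever. As $B_1$ varies continuously inside $A_0$, the response $A_1 = \tau(B_0, B_1)$ may jump discontinuously, so the outcome map need not be continuous, its image need not be connected or closed, and indeed need not even be Borel. Everything downstream of that claim (the image is an interval, non-degeneracy implies perfectness) therefore collapses. The same objection applies to the Bob half of your argument.

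The standard repair --- and what the cited argument actually does --- is a Cantor-scheme construction: one builds a binary tree of partial plays in which, at each branching node, the player working against the strategy makes two \emph{disjoint} legal moves, chosen so that the strategy's responses along distinct branches are eventually separated. The shrinking diameters give continuity of the resulting map from $2^{\mathbb{N}}$ to outcomes, and the disjointness gives injectivity, so the set of outcomes along all branches is a perfect set contained in $B$ (resp.\ $B^c$). The non-triviality hypothesis is used precisely here, to guarantee that there is room for two disjoint legal moves at each branching stage; this replaces your vague ``centering-versus-shifting computation,'' which you yourself flag as the unverified crux. So the idea is right, but the perfect set must be produced by a tree of deliberately separated plays, not by appealing to connectedness of a parameter space.
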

This is shown in \cite{AFS}.  The idea of the proof is showing that, in any non-trivial case, an $(\alpha, \beta)$-determined set must either contain a perfect set or it's complement must, so a Bernstein set cannot be determined.

\begin{theorem} For every non-trivial value of $\alpha$ and $\beta$, Alice does not have a winning strategy on a Luzin or Sierpinski set.
\end{theorem}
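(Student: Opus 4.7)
The plan is: assuming Alice has a winning strategy $\tau$ on a Luzin set $L$ (resp.\ Sierpinski set $S$), extract from the set of game outcomes a perfect subset that is simultaneously nowhere dense and Lebesgue null, and derive a contradiction with the defining property of $L$ (resp.\ $S$).

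Fix Bob's first move $B_0$ and let $\mathcal{B}$ denote the Polish space of all Bob-plays consistent with $\tau$. The outcome map $\sigma\colon\mathcal{B}\to\mathbb{R}$, sending $(B_0,B_1,\ldots)$ to $\bigcap_n A_n$, is continuous, so its image $K:=\sigma(\mathcal{B})$ is an analytic subset of $\mathbb{R}$, contained in $L$ (resp.\ $S$).

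The first main step is to show $K$ is uncountable. Suppose for contradiction $K=\{x_1,x_2,\ldots\}$; then $\mathcal{B}=\bigcup_i\sigma^{-1}(x_i)$ is a countable union of closed sets, so by the Baire category theorem some $\sigma^{-1}(x_{i_0})$ has nonempty interior. This produces a finite prefix $(B_1^\ast,\ldots,B_k^\ast)$ of Bob-moves after which every consistent extension yields outcome $x_{i_0}$; in particular $x_{i_0}\in B_m$ for every legal Bob-move $B_m$, $m>k$, extending the prefix. I would contradict this by exhibiting a ``pushing'' continuation for Bob. Tracking the fractional position $p_m$ of $x_{i_0}$ inside $A_m$, a direct computation gives the one-step recursion $p_{m+1}=(p_m-d_m-\beta d'_m)/(\alpha\beta)$, where Bob chooses $d_m\in[0,1-\beta]$ (placement of $B_{m+1}$ in $A_m$) and $\tau$ chooses $d'_m\in[0,1-\alpha]$ (placement of $A_{m+1}$ in $B_{m+1}$). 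Alice can force $p_{m+1}=1/2$ only when $p_m-d_m$ lies in an interval of length $\beta(1-\alpha)$, whereas Bob's parameter $d_m$ sweeps an interval of length $1-\beta$; the non-triviality hypothesis $\alpha>2-1/\beta$ is exactly $\beta(1-\alpha)<1-\beta$, so for suitable $d_m$ Alice cannot center, and the resulting dynamics has only unstable fixed points (local slope $1/(\alpha\beta)>1$). Hence $|p_m-1/2|$ grows geometrically and $p_m$ exits $[0,1]$ in finitely many rounds, forcing $x_{i_0}\notin A_m$ and producing a legal $B_{m+1}\not\ni x_{i_0}$, a contradiction.

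With $K$ uncountable and analytic, the Perfect Set Theorem produces a nonempty perfect set $P\subseteq K$. Inside $P$, a standard Cantor-tree construction (at each stage refining each active closed neighborhood into two disjoint closed sub-neighborhoods of rapidly shrinking diameter) yields a perfect $C\subseteq P$ that is simultaneously nowhere dense and Lebesgue null. For the Luzin case, $C$ is closed and nowhere dense, hence meager, so $C=L\cap C$ is countable by the Luzin property, contradicting perfectness of $C$. For the Sierpinski case, $C$ is null, so $C=S\cap C$ is countable by the Sierpinski property, again contradicting perfectness.

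The hard part is the pushing step: verifying rigorously and uniformly in every non-trivial $(\alpha,\beta)$ that Alice cannot prevent the dynamics $p_m$ from escaping $[0,1]$ in finitely many rounds requires the game-theoretic bookkeeping sketched above, and this is where I expect the bulk of the technical work to lie.
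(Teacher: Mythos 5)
Your overall blueprint coincides with the paper's: from Alice's winning strategy extract a nonempty perfect subset of the target set, arrange for it to be simultaneously meager and Lebesgue null, and contradict the defining property of a Luzin (resp.\ Sierpinski) set. The paper disposes of the perfect-set extraction in one line by citing Theorem 3 of \cite{AFS}, which builds a Cantor scheme of Bob-plays directly (and whose branches automatically form a null, nowhere dense perfect set); you instead try to prove the extraction from scratch via descriptive set theory, and two steps of that route have real gaps.

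First, the claim that $K=\sigma(\mathcal{B})$ is analytic is not justified for an arbitrary winning strategy $\tau$: the set of Bob-plays consistent with $\tau$ is cut out by conditions of the form $B_{n+1}\subseteq\tau(B_0,\dots,B_n)$, and since $\tau$ is merely a function (it need not be Borel, or even Lebesgue measurable), $\mathcal{B}$ need not be a Polish (or Borel, or analytic) subspace, so the Perfect Set Theorem does not apply to $K$ as defined. This is fixable --- restrict Bob at each stage to a fixed countable dense set of legal moves, so that $\mathcal{B}$ becomes the body of a countably branching tree and $K$ becomes analytic; or, better, run your pushing argument locally to produce a genuine Cantor scheme of positions with pairwise disjoint closed intervals, which is exactly what \cite{AFS} does and which makes the Perfect Set Theorem unnecessary --- but as written the step fails. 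Second, the pushing argument itself conflates ``Alice cannot force $p_{m+1}=1/2$'' with ``$|p_m-1/2|$ grows geometrically.'' Alice does not need to re-center; she only needs to keep $p_m$ inside $[1-\beta,\beta]$. What actually drives the escape is the quantitative drift: per round Bob can displace the reachable window by $(1-\beta)/(\alpha\beta)$ while Alice can only correct by $\beta(1-\alpha)/(\alpha\beta)$, so the net outward drift is $\bigl(1-2\beta+\alpha\beta\bigr)/(\alpha\beta)$, which is strictly positive exactly when $\alpha>2-1/\beta$ strictly; combined with the expansion factor $1/(\alpha\beta)$ this does force $p_m$ out of $[1-\beta,\beta]$ in finitely many rounds. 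But at the boundary $\alpha=2-1/\beta$ the drift is zero and one can check that Alice can keep $x_{i_0}$ centered forever (e.g.\ $\alpha=8/9$, $\beta=9/10$: the response $d_m'=(p_m-d_m-\alpha\beta/2)/\beta$ always lies in $[0,1-\alpha]$), so your argument does not cover the boundary of the non-trivial region, whereas the paper's appeal to \cite{AFS} handles every non-trivial pair. You should either prove the strict inequality holds in all non-trivial cases or supply a separate argument there.
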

\begin{proof} From a winning strategy for Alice, as in Theorem 3 from \cite{AFS}, we construct a perfect set contained in the target set. It is easy to see that this perfect set is both measure zero and nowhere dense. 
\end{proof}
\begin{theorem} If $\beta < \frac{1}{3}$, then for any Luzin or Sierpinski set, Bob has a winning strategy in the $(\alpha, \beta)$ Schmidt game. 
\end{theorem}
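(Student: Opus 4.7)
The plan is to have Bob force the outcome $x = \bigcap_n A_n$ to lie in a Cantor-like set $C \subseteq B_0$ which is simultaneously nowhere dense (hence meager) and of Lebesgue measure zero, and then to exploit the countability of $L \cap C$ (or $S \cap C$) to diagonalize away the remaining ``bad'' points using the binary choice the hypothesis $\beta < 1/3$ affords Bob at each round.

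I would build $C$ stage by stage: inside each level-$n$ piece $B$ (an interval of length $(\alpha\beta)^n|B_0|$) place $k = k(\alpha,\beta)$ disjoint level-$(n+1)$ pieces, each of length $\alpha\beta|B|$, at a common spacing $s$ satisfying two constraints. First, $s$ should be small enough that every closed subinterval of $B$ of length $\alpha|B|$ (i.e.\ every legal Alice response) contains at least two of the level-$(n+1)$ pieces; a direct geometric computation shows this is equivalent to $k \ge 1 + 2(1-\alpha\beta)/(\alpha(1-\beta))$. Second, one wants $k\alpha\beta < 1$, so that the total length at level $n$, namely $(k\alpha\beta)^n|B_0|$, tends to $0$ and $C$ is Lebesgue null. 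The two bounds $1 + 2(1-\alpha\beta)/(\alpha(1-\beta))$ and $1/(\alpha\beta)$ coincide exactly at $\beta = 1/3$ and sandwich a nonempty interval for $\beta < 1/3$, so that in the non-trivial parameter range an admissible integer $k$ exists and a suitable null, nowhere-dense $C$ can be built.

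Bob then takes $B_0$ to be the convex hull of $C$ and responds to each Alice move $A_{n-1}$ by playing $B_n \subseteq A_{n-1}$ equal to one of the (at least two) disjoint level-$n$ pieces of $C$ contained in $A_{n-1}$. For the Luzin case, enumerate the countable set $L \cap C = \{y_1, y_2, \ldots\}$; at round $n$ Bob chooses a candidate piece that does not contain $y_n$, which is possible since the candidates are disjoint and so at most one of them contains the single point $y_n$. The outcome $x \in \bigcap_n B_n$ lies in $C$ by nesting and satisfies $x \ne y_n$ for every $n$ because $y_n \notin B_n \ni x$; hence $x \in C \setminus L$ and Bob wins. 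The Sierpinski case is identical, using the null condition on $C$ in place of meagerness to make $|S \cap C| \le \aleph_0$.

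The main obstacle is the joint calibration of $C$: enough pieces per level to guarantee Bob's binary choice against every legal Alice move, while keeping the total mass small enough that $C$ is null. These opposing requirements are jointly satisfiable exactly when $3\beta < 1$, which is the hypothesis, modulo a minor non-uniform refinement in the rare cases where the two real-valued bounds sandwich a single integer. Once $C$ is built, the ``avoid one bad point per round'' diagonalization is entirely routine.
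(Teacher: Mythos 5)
Your strategy is the same as the paper's: build a Cantor-like set $C$ that is simultaneously null and nowhere dense, calibrated so that every legal Alice move contains at least two disjoint level pieces of the length $\alpha\beta|B_n|$ that Bob is required to play, and then let Bob diagonalize against the countable set $L\cap C$ (resp.\ $S\cap C$) by choosing at round $n$ a piece avoiding the $n$-th bad point. That outline, and your diagonalization step, are correct and coincide with the paper's argument.

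The gap is in the calibration of $C$, and it is neither as minor nor as rare as you suggest. Because you insist that the $k$ pieces be equally spaced and span all of $B$, the spacing is forced to be $s=(1-k\alpha\beta)/(k-1)$, and your two requirements become $k \ge 1+2(1-\alpha\beta)/(\alpha(1-\beta))$ and $k<1/(\alpha\beta)$. These real bounds do bracket a nonempty interval when $\beta<1/3$, but that interval need not contain an integer, even for non-trivial parameters: for $\alpha=0.55$, $\beta=0.32$ the interval is approximately $[5.41,\,5.68)$, so no admissible $k$ exists and the construction cannot be carried out; no refinement \emph{within} the equal-spacing-and-spanning framework can produce one, since the obstruction is that the two constraints are jointly unsatisfiable over the integers. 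The paper avoids this by dropping the spanning requirement: it left-aligns pieces of length $\alpha\beta|B|$ with gap $\frac{\alpha-3\alpha\beta}{3}|B|$, so the period is exactly $\frac{\alpha}{3}|B|$. A window of length $\alpha|B|$ then always captures at least $\lfloor 3(1-\beta)\rfloor \ge 2$ complete pieces, the slack left at the right end is less than one period so the boundary causes no trouble, and the retained measure ratio per level is at most $1-\frac{\alpha(1-3\beta)}{3}<1$, which gives nullity. If you replace your spacing rule by this one (or by any fixed gap at most $\frac{\alpha(1-3\beta)}{2}|B|$ with the slack controlled), the rest of your argument goes through verbatim.
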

\begin{proof} Let Bob's first move be the unit interval, and let $S$ be a Sierpinski set. Construct disjoint intervals of length $\alpha\beta$ in the interval. These intervals will be separated by $\frac{\alpha - 3\alpha\beta}{3}$, which is possible when the first interval shares its left endpoint with the left endpoint of Bob's first move. We continue in this manner to build a Cantor-like set $P$, specifically at each stage $n$ we have a finite set of disjoint intervals of size $(\alpha\beta)^n$. Inside each of these intervals put disjoint intervals of length $(\alpha\beta)^{n+1}$ separated by $(\alpha\beta)^{n}(\frac{\alpha - 3\alpha\beta}{3})$. We can clearly see that $P$ has measure zero. Since $S$ is a Sierpinski set, $S \cap P = \{a_0, a_1, \cdots\}$ is countable.

We now define Bob's winning strategy. At round $n$, Alice has made a move $A_n$ which has length $(\alpha\beta)^{n-1}\alpha$. This move must contain at least two intervals of length $(\alpha\beta)^n$, which are intervals at the $n$-th stage construction of $P$. Bob picks one of these intervals that does not contain $a_n$. This defines a strategy for Bob such that every run of the strategy will be in $P \setminus S \subseteq S^C$. 

If $A$ is a Luzin set, we construct the same $P$ and simply note that $P$ is meager, so the strategy is defined in exactly the same way. 
\end{proof}

Theorem~\ref{main} shows that for certain choices of $\alpha, \beta$ the Schmidt game for Vitali sets can be determined. In contrast we have the following.

\begin{theorem} For any non-trivial $\alpha$, $\beta$ there is a Vitali set which is not $(\alpha, \beta)$ determined.
\end{theorem}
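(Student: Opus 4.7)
The plan is to combine a construction of Michalski \cite{M} with the general principle from \cite{AFS} already invoked in the proof of Theorem~\ref{bernstein}: in any non-trivial case, an $(\alpha,\beta)$-determined target must either contain a nonempty perfect set or have its complement contain one. If one can produce a Vitali set $V$ that is simultaneously a Bernstein set, neither $V$ nor $V^c$ will contain a perfect subset, and the conclusion is forced.

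First I would recall (or reproduce) Michalski's construction of a Vitali--Bernstein set. Enumerate all nonempty perfect subsets of $\mathbb{R}$ as $(P_\xi)_{\xi<\mathfrak{c}}$ and, by transfinite recursion of length $\mathfrak{c}$, build two disjoint sets $V_0, V_1 \subseteq \mathbb{R}$: at stage $\xi$, choose $x_\xi \in P_\xi$ whose rational coset has not been used at any earlier stage, put $x_\xi$ into $V_0$, then choose $y_\xi \in P_\xi$ in yet another unused rational coset and put $y_\xi$ into $V_1$. After the recursion, extend $V_0$ to a complete Vitali set $V$ by picking, using AC, one arbitrary representative from each coset that is not already represented in $V_0$ (for cosets visited by $V_1$ this representative is chosen different from the $V_1$-point so that $V_1 \subseteq V^c$). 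By construction $\{x_\xi\} \subseteq V \cap P_\xi$ and $\{y_\xi\} \subseteq V^c \cap P_\xi$ for every $\xi$, so $V$ is a Bernstein set and a Vitali set simultaneously.

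Then I would finish by quoting the \cite{AFS} principle used in Theorem~\ref{bernstein}. For non-trivial $\alpha, \beta$, if the Vitali--Bernstein set $V$ were $(\alpha,\beta)$-determined, then either $V$ or $V^c$ would contain a nonempty perfect set, contradicting the Bernstein property. Therefore $V$ is not $(\alpha,\beta)$-determined, proving the theorem.

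The main obstacle is the cardinality bookkeeping in the Michalski recursion, namely checking that at every stage $\xi$ the perfect set $P_\xi$ meets enough still-unused rational cosets to allow both the $V_0$-choice and the $V_1$-choice. This is standard: every nonempty perfect set has cardinality $\mathfrak{c}$ and hence meets $\mathfrak{c}$ distinct rational cosets, while strictly fewer than $\mathfrak{c}$ cosets have been assigned before stage $\xi$, so admissible $x_\xi$ and $y_\xi$ always exist. Once this is in place, the rest of the argument is an immediate invocation of the perfect-set-from-strategy machinery of \cite{AFS} already exploited in the Bernstein case.
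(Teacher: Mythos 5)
Your proposal is correct and takes essentially the same route as the paper: obtain a Vitali set that is simultaneously a Bernstein set and then invoke the result of \cite{AFS} that Bernstein sets are $(\alpha,\beta)$-undetermined in every non-trivial case. The only difference is that you carry out the transfinite construction of the Vitali--Bernstein set explicitly, whereas the paper simply cites Theorem 2.2 of \cite{M} for its existence; your cardinality bookkeeping for that construction is sound.
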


\begin{proof} By Theorem 2.2 from \cite{M}, there exists a Vitali set that is also a Bernstein set. Being a Bernstein set, by Theorem 3 from \cite{AFS}, this set is never $(\alpha,\beta)$ determined for any non-trivial value of $\alpha$ and $\beta$.
\end{proof}

The figure below illustrates the determinacy of the $(\alpha,\beta)$ Schmidt game on Vitali sets for given values of $\alpha$ and $\beta$. 
Several questions arise as a result of Theorem 2 and Theorem 4.
\begin{center}
\begin{tikzpicture}
    \draw (0, 4) -- (0, 0) -- (4, 0) -- (4, 4) -- (0, 4);
    \draw (4/12, 4) -- (4/12, 1.959184/2);
    \draw (0, 0) parabola (4/12, 1.959184/2);
    \draw (0.1, 0.1) -- (4, 4);
    \node at (1.5, 2.75) {\Large{?}};
    \node at (2.33, 0.53) {\small No $V$ is ($\alpha, \beta$)-winning};
    \node at (1, 4.5) {\small There exists $(\alpha, \beta)-$winning $V$};
    \draw[->] (0.15, 4.2) -- (0.15, 3.9);
    \node at (-0.5, 2) {\Large$\beta$};
    \node at (2, -0.3) {\Large$\alpha$};
    \node at (4.2, -0.3) {(1, 0)};
    \node at (-0.5, 4) {(0, 1)};
    \node at (-0.5, -0.3) {(0, 0)};
\end{tikzpicture} \end{center}

\begin{question}
For $\alpha, \beta$ in the ? area, do there exist Vitali sets that are winning?
\end{question}

% https://msp.org/soon/coming.php?jpath=involve

\end{document}